\newtheorem {thm}{Theorem}
\newtheorem{theorem}{Theorem}[section]
\newtheorem{cor}[theorem]{Corollary}
\newtheorem{lem}[theorem]{Lemma}
\theoremstyle{remark}
\newtheorem{rem}[theorem]{Remark}
\newtheorem{prop}[theorem]{Proposition}
\newtheorem{definition}[theorem]{Definition}
\newcommand{\HH}{\mbox{${\mathcal H}$}}
\newcommand{\OO}{\mbox{${\mathcal O}$}}
\newcommand{\Rbold}{\mbox{${\mathbb R}$}}
\newcommand{\Ebold}{\mbox{${\mathbb E}$}}
\newcommand{\Pbold}{\mbox{${\mathbb P}$}}
\newcommand{\ee}{\mbox{$\overrightarrow{e}$}}
\def \a   {\alpha}
\def \k   {\kappa}
\newcommand{\eps}{\epsilon}
\def\be#1\ee{\begin{equation}#1\end{equation}}
\def\ben#1\een{\begin{equation*}#1\end{equation*}}
\begin{document}

\title[Positive reinforced generalized time-dependent P\'olya urns]{Positive reinforced generalized time-dependent P\'olya urns via stochastic approximation}
\author{Wioletta M. Ruszel} 
\address[Utrecht University]{Mathematical Institute}
\address{Budapestlaan 6, 3584 CD Utrecht, The Netherlands}
\email{w.m.ruszel@uu.nl}          
\author{Debleena Thacker}  
\address[Durham University]{Department of Mathematical Sciences}
\address{Campus, Stockton Rd, Durham DH1 3LE, United Kingdom}         
\email{debleena.thacker@durham.ac.uk} 
        
\date{\today}

\begin{abstract}
Consider a generalized time-dependent P\'olya urn process defined as follows. Let $d\in \mathbb{N}$ be the number of urns/colors. At each time $n$, we distribute $\sigma_n$ balls randomly to the $d$ urns, proportionally to $f$, where $f$ is a valid reinforcement function. We consider a general class of positive reinforcement functions $\mathcal{R}$ assuming some monotonicity and growth condition. The class $\mathcal{R}$ includes convex functions and the classical case $f(x)=x^{\alpha}$, $\alpha>1$. The novelty of the paper lies in  extending stochastic approximation techniques to the $d$-dimensional case and proving that eventually the process will fixate at some random urn and the other urns will not receive any balls any more.
\end{abstract}

\keywords{generalized P\'olya urn models, time-dependent P\'olya urn models, positive reinforcement, stochastic approximation, dominance, fixation} 

\subjclass[2010]{Primary: 60F05, 60F10; Secondary: 60G50}
                                             
\maketitle

\section{Introduction}
\label{Sec: Intro}
The classical P\'olya urn model with two urns (colors) to which balls are added randomly was introduced by  Eggenberger and P\'olya in 1923, \cite{Polya}. 
Since then, many generalizations and extensions of the classical model have been studied, see e.g. \cite{Peman} for a survey. One of the fundamental questions is how the composition of the urns will look like and how it depends on the way balls are added as time goes to infinity.
There are numerous applications in economics, computer science and biology where the model is better known as \textit{balls and bins model}, \cite{trade, trade1, trade2, CS, bio, bai} to mention a few.

A popular generalization is the \textit{non-linear P\'olya urn} model  or  \textit{balls and bin model with feedback}. The probability of a new ball choosing a bin with $x$ existing balls is proportional to $f(x)$ where $f$ will be referred to as the feedback function, \cite{trade2}. A common choice of the feedback function is $f(x) = x^{\alpha}$, $\alpha>0$. 

For the original case $\alpha=1$ , it was proven in \cite{Polya} that the proportion of balls in each bin converges to a beta-distributed random variable. In the positive feedback regime $\alpha>1$, which is also referred to as preferential attachment, the authors in \cite{trade2} proved  \textit{dominance}, i.e. almost surely the proportion of each bin converges to a $\{0,1\}$-random variable.  A stronger result which we call \textit{fixation} or \textit{monopoly} was proven by \cite{mon}. Fixation refers to the event that eventually one bin receives all but a finite number of balls. The onset of the time of fixation (speed of convergence towards the stationary distribution) in the positive feedback regime was studied in \cite{Oliviera_2009}. In fact, the author in \cite{Oliviera_2009} studies the onset of fixation for more general feedback functions satisfying some growth conditions and being perturbations of the canonical case $x^{\alpha}$. In particular for that class of feedback function we have that $\sum_{x=1}^{\infty} \frac{1}{f(x)} <\infty$. 

The \textit{negative feedback regime} is characterized by $\alpha<1$, and in this case the proportion of balls in the urns converges towards the uniform distribution on $\{1,2,\ldots,d\}$ where $d$ is the number of bins, \cite{trade2}. A time-dependent version of positively reinforced 2-urn models was studied in \cite{Si-2018, Pem-90} where at each time $n$, $\sigma_n$  balls (satisfying some growth condition) were randomly added to either one urn \cite{Pem-90} or independently to both \cite{Si-2018}. In both cases there will be dominance. Depending on some growth conditions of $\sigma_n$, \cite{Si-2018} demonstrates there might be no fixation.

Other generalizations include considering $d$ urns or colors, $d>2$ and more general addition rules like addition generating matrices and convex resp. concave feedback functions, \cite{bai1, bai2, LauPa1, LauPa}. The replacement matrix $H=\left(\left(H_{ij}\right)\right)_{ij}$ models the placement of $H_{ij}$ balls to urn $j$  when urn $i$ is chosen.
In \cite{LauPa} the authors prove that if the feedback function is strictly concave and $H$ bi-stochastic, then the urn composition converges towards the uniform distribution $(1/d,\cdots, 1/d)$. The limiting distribution will not be uniform with probability one if $f$ is convex. Finally they prove in the same paper that for $d=2$, $f$ convex and $H$ irreducible, the limiting proportion converges towards the equilibrium points of the corresponding mean-field function resulting from the stochastic approximation approach. A concave feedback function, which includes the negative reinforcement regime, tends to equalize the asymptotic distribution of the proportion of different colors, whereas a convex $f$ which includes the positive reinforcement regime tends to amplify the effect of the generating matrix $H$. In \cite{Kaur}, the author proves CLT type results for the proportion vector of colors around the uniform distribution in the negative reinforcement setting, when $H$ is double-stochastic and $f$ Lipschitz.

Urn models with infinitely many colors were treated in \cite{Deb1, Deb2, janson}. In \cite{Deb1} the authors
introduce a class of balanced urn schemes with infinitely many colors indexed by $\mathbb{Z}^d$ where the replacement schemes are given by the transition matrices associated with bounded increment random walks. They show that the urn composition of the $n$-th selected ball follows a Gaussian distribution. The authors in \cite{mai} generalized the possibly infinite space or urns to general Polish spaces and study the asymptotic behaviour of these measure-valued P\'olya urn processes. The author in \cite{janson} generalizes results obtained in \cite{Deb1} and \cite{mai} and studies measure-valued P\'olya urn processes under stochastic replacement matrices $H$.

A simplistic model for the reinforcement of neural connections in the brain using positive reinforced \textit{interacting P\'olya urns} was introduced in \cite{mark}. The urns/colors represent the edges of a graph. Roughly speaking, one  first chooses a random subset
 of colours (independent of the past) from $n$ colours of balls, and then positively reinforce a colour from this subset. In \cite{mark} the stability of equilibria and examples of different graphs were studied. Interesting follow-up research on percolation questions on different positively reinforced tree-like graphs and its application for neuronal connections were studied in \cite{hirsch, hirsch1}.\\

In this article we consider a generalized time-dependent P\'olya urn model with $d$ urns, $d\in \mathbb{N}$. More precisely, at each time step, $\sigma_n$ many balls are added randomly to the $d$ urns, $f$-proportionally to their weight with instantaneous replacement. We will assume that 
\[
(i) \sum^{\infty}_{n=1} \frac{\sigma_n}{\sum_{j=1}^n \sigma_j} = \infty \, \, \, \, \, \text{ and }  \, \, \, \, \, (ii) \sum^{\infty}_{n=1} \left (\frac{\sigma_n}{\sum_{j=1}^n \sigma_j} \right )^2< \infty.
\]
Condition (i) ensures that $\sigma_n\geq 1$ for all $n \in \mathbb{N}$ hence we keep on adding balls throughout the whole time-evolution, whereas condition (ii) restricts the growth of $\sigma_n$. Polynomial growth is allowed but not exponential growth, since it contradicts (ii). This condition is necessary to ensure that the  process, seen as a stochastic approximation (SA) scheme is not subjected to a large noise term which will hinder the process to converge. Both conditions appear naturally when using SA techniques from the dynamical systems viewpoint, see e.g. \cite{Rob} or \cite[Section 1]{Bor}. The class of reinforcement functions $\mathcal{R}$ (see Section \ref{Sec: Class of reinforcement functions} for its definition) is very general and satisfies some natural continuity and monotony conditions. W.l.o.g. we will evaluate $f$ not on the number of balls in an urn but on the proportions. Additionally, we assume that
\begin{equation}\label{eq:alpha}
\alpha := \inf_{x\in (0,1)} \frac{xf'(x)}{f(x)} >1.
\end{equation}
Note that if $f$ is convex or of the type $f(x)=x^{\alpha}$, $\alpha>1$ then \eqref{eq:alpha} is necessarily satisfied but the converse is not necessarily true.
A similar condition can be found in \cite{Oliviera_2009, LauPa}, in \cite{LauPa} the authors assume additionally that $f$ is convex and $H$ irreducible. Our case is not covered in \cite{LauPa}, since taking $H$ to be the identity matrix is not irreducible. We will prove in Theorems \ref{Thm: dominance} and \ref{Thm: fixation} that there is dominance and fixation for this class of general reinforcement functions $\mathcal{R}$ and generalized urn model. Extending ideas from \cite{Si-2018} we will prove that infinitely often the process will move away from non-trivial equilibrium points. Using SA techniques, \cite{Higham, Ben1,Ben2} and coupling the process to an appropriate ODE we will prove that the only possible stationary points of the process are the extremal points of the simplex $[0,1]^d$. In fact, 
condition \eqref{eq:alpha} ensures that the Jacobian of the ODE at all non-trivial points will have positive eigenvalues. This will imply that the dynamical system is not stable around non-trivial points. 
The novelty of the paper lies in proving dominance in this general setting where we consider $d$ urns, $d>2$, $f$ not necessarily convex, and the addition of balls $\sigma_n$ is variable. We extend SA techniques to this setting. To the best of our knowledge, SA techniques in the positive reinforcement regime were only applied to $d=2$ case in the literature, e.g. \cite{LauPa}.

\subsection{Outline of the paper}
\label{Sec: Outline of the paper}

The structure of the manuscript is as follows. In Section \ref{sec:def} we introduce the model and assumptions on the class reinforcement functions $f$. The results are presented in Section \ref{sec:results} whereas Section \ref{sec:proofs} is devoted to their proofs. Finally, in the Appendix \ref{sec:app} we introduce stochastic approximation techniques and relevant results.

\section{Model and Definitions}\label{sec:def}

\subsection{Model}
\label{Subsec: Model}
We assume that all random variables are defined on the same probability space $\left(\Omega, \mathcal{F}, \mathbb{P}\right)$.
We consider the following generalization of P\'olya urn scheme where
colors are indexed by a non-empty finite set $S:=\{1,2,\ldots,d\}$, $d\in \mathbb{N}$. For $n \in \mathbb{N}$ we denote the composition of the urn at time $n$ by $U_n:=\{U_{n,j}\}_{j \in S}$, where $U_{n,j}$ is the  "weight" of the $j-$th color at time $n$. 

We start with a non-trivial initial composition $U_0$, a given non-negative reinforcement function $f$ and a sequence of positive integers $\left(\sigma_n\right)_{n \ge 1}$. 
At every discrete time point $(n+1)$, $\sigma_{n+1}$ balls or colors are added $f$- proportional to their weight, with instantaneous replacement. That is, given $U_0, U_1, \ldots, U_n$, $\left(X_i^{(n+1)}\right)_{1 \leq i \leq \sigma_{n+1}}$ are i.i.d.\ random vectors, such that

\begin{equation}\label{Eq:choice_of_color}
\Pbold \left(X_i^{(n+1)}=e_j \mid U_0, U_1, \ldots U_n\right)= \frac{f (\theta_{n,j})}{\sum_{k=1}^d f (\theta_{n,k})},
\end{equation}
where $e_j, \text { } 1 \le j \le d$ is the unit vector corresponding to the canonical basis in $\Rbold^d$ and $\theta_n=\left(\theta_{n,k}\right)_{1 \le k \le d}$ is defined by $\theta_{n,k}=\frac{U_{n,k}}{\sum_{j=1}^d U_{n,j}}$. In other words, $\theta_{n,k}$ is the proportion of balls of color $k$ at time $n$. The urn composition is then updated according to the following rule:
\begin{equation}\label{Eq:Fundamental_equation}
U_{n+1}=U_n+ \displaystyle \sum_{i=1}^{\sigma_{n+1}} X_i^{(n+1)}.
\end{equation} 
In words, at time $(n+1)$ if the $j$-th color is selected at the $i$-th trial for $1\le i \le \sigma_{n+1}$, then we add a single ball of the same color to the urn.

Let us denote by $\tau_n$ the total number of balls at time $n$.
Observe that 
$$\tau_n=\|U_n\|_1= \tau_0 + \sum_{j=1}^{n+1} \sigma_j = \sum_{j=1}^d U_{0,j}+ \sum_{j=0}^{n-1} \sigma_j,$$ where $\| \cdot \|_1$ is the $\ell^1(\mathbb{N}^d)$-norm. Using this notation, $\theta_n= \frac{U_n}{\tau_n}$ and $\theta_n \in \{y \in \Rbold^d_+ \setminus\{0\}: \sum_{i=1}^d y_i=1\}$.

\begin{definition}\label{def:dom}
We call the event $\mathscr{D}$ \textit{dominance} if 
\begin{equation}
\mathscr{D} = \{ \exists i\in \{1,\cdots, d\} \text{ s.t. } \lim_{n\to \infty }(\theta_{n,1},\cdots \theta_{n,d})= e_i \}
\end{equation}
where $e_i$ is the $i$-th coordinate vector in $\Rbold^d$. 
\end{definition}
Dominance means that eventually the proportion of different colors in the urn becomes trivial, except for a single color.

\begin{definition}\label{def:fix}
We say the urn model $\left(U_n\right)_{n \ge 0}$ \textit{fixates} if almost surely, 
\begin{equation}\label{Def: fixation}
\mathscr{F} = \{ \exists N \ge 1 \text{ and } i\in \{1,\cdots, d\}\text{ s.t. for all }n \ge N, \, U_{n+1,i}= U_{n,i}+\sigma_{n+1}\}.
 \end{equation}
\end{definition}

It is important to note here that if the process fixates at some color $J$ out of $\{1, \cdots d\}$ (which is random), 
then all other colors stop growing after the random time of fixation. 
It is also clear that if the process fixates, it implies that there is almost surely dominance. 
The converse is not always true, (see \cite{Si-2018} for examples).

\subsubsection{Class of reinforcement functions $\mathcal{R}$}
\label{Sec: Class of reinforcement functions}
For the purpose of this paper, we will assume that $f:[0,1] \rightarrow \mathbb{R}_+$ satisfies 
\begin{itemize}
\item [{\bf(A)}] $f$ is a strictly non-decreasing and continuous, such that $f(0)=0$ and $f(1)=1$.
\item [{\bf(B)}] $f \in C^1((0,1))$ and the semi-derivatives $\lim_{x\rightarrow 0^+} f'(x)$ and $\lim_{x\rightarrow 1^-} f'(x)$ exist. 
\item [{\bf(C)}] 
\begin{equation}\label{Eq: Assumption_exponent}
\alpha:= \inf_{x \in \left(0,1\right)} \frac{x f'(x)}{f(x)} >1.
\end{equation}
\end{itemize}

The assumption {\bf(A)} that $f$ is non-decreasing, and $f(0)=0$ and $f(1)=1$ ia a natural assumption that ensures that "the higher the proportion of a color, more likely it is to be chosen". The assumption {\bf(B)} is for technical purposes.  Let $\mathcal{R}$ be the class of all functions that satisfy {\bf(A)}, {\bf(B)} and {\bf(C)}. 

Examples are:
\begin{itemize}
\item[(i)] $f(x)= x^{\a}, \text{ } \a >1$
\item[(ii)] $f(x)= x^{2+\epsilon} e^{-x +1}$, $\epsilon >0$
\end{itemize}
A classical example of such a function is example (i). From assumption {\bf(C)}, it may seem that for all $f \in \mathcal{R}$, $f$ has to be is convex. However, this is not the case as we will show in the following counterexample. Let $\epsilon>0$ and consider example (ii) Then $f$ satisfies trivially assumptions {\bf(A)} and {\bf(B)}. For the third assumption {\bf(C)}, note that
\[
f'(x) = (2+\epsilon - x) x^{1+\epsilon}e^{-x+1} 
\] 
so that $\inf_{x \in \left(0,1\right)} \frac{x f'(x)}{f(x)}=1+\epsilon >1$. This function is not convex, indeed
\[
f''(x) = (x^2 -2(2+\epsilon)x+(1+\epsilon)(\epsilon))x^{\epsilon} e^{-x+1}
\]
and we see easily see that for $\epsilon$ small, e.g. $\epsilon=0.1$, the second derivative is changing sign for $x\in (0,1)$. 
 
\begin{lem}\label{Lemma: properties_of_f} 
Let $f \in \mathcal{R}$. Then 
\begin{itemize}
\item[(i)] $f$ is Lipschitz on $(0,1)$.
\item[(ii)] \[
\lim_{x\rightarrow 0^+} \frac{f(x)}{x} = f'(0)  \ge 0,
\]
is well-defined.
\item[(iii)] The map $x\mapsto \frac{f(x)}{x}$ is increasing for all $x\in (0,1)$.
\item[(iv)] For all $x\in [0,1]$ we have that $f(x) \leq x^{\alpha}$, where $\alpha$ is defined in \eqref{Eq: Assumption_exponent}. 
\end{itemize} 
\end{lem}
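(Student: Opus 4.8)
The plan is to handle the four items in the order stated, since each uses the hypotheses (A)--(C) differently and the later parts reuse elementary facts from the earlier ones. Throughout I will exploit that (A) forces $f(x)>0$ for every $x\in(0,1)$: as $f$ is strictly increasing with $f(0)=0$, we get $f(x)>f(0)=0$ on the open interval, which is what makes the quotients $f(x)/x$ and $xf'(x)/f(x)$ meaningful. For \textbf{(i)}, I would first show $f'$ is bounded on $(0,1)$. By (B), $f'$ is continuous on $(0,1)$ and the one-sided limits $\lim_{x\to0^+}f'(x)$ and $\lim_{x\to1^-}f'(x)$ exist and are finite; hence $f'$ extends to a continuous function on the compact interval $[0,1]$ and is therefore bounded, say $\lvert f'\rvert\le M$. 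The mean value theorem then yields $\lvert f(x)-f(y)\rvert\le M\lvert x-y\rvert$ for all $x,y\in(0,1)$, which is the claimed Lipschitz property.

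For \textbf{(ii)}, using $f(0)=0$ I would write the difference quotient as $\frac{f(x)}{x}=\frac{f(x)-f(0)}{x-0}$. By the mean value theorem there is $\xi_x\in(0,x)$ with $\frac{f(x)-f(0)}{x}=f'(\xi_x)$, and as $x\to0^+$ we have $\xi_x\to0^+$, so $\lim_{x\to0^+}\frac{f(x)}{x}=\lim_{t\to0^+}f'(t)$, which exists by (B). Calling this value $f'(0)$ makes the limit well-defined; it is nonnegative because $f$ non-decreasing forces $f'\ge0$ on $(0,1)$, whence the limit is $\ge0$.

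For \textbf{(iii)}, I would simply differentiate: for $x\in(0,1)$, $\frac{d}{dx}\frac{f(x)}{x}=\frac{xf'(x)-f(x)}{x^2}$. Condition (C) gives $\frac{xf'(x)}{f(x)}\ge\alpha>1$, and since $f(x)>0$ this rearranges to $xf'(x)\ge\alpha f(x)>f(x)$, so the numerator $xf'(x)-f(x)$ is strictly positive and the map is (strictly) increasing. For \textbf{(iv)}, the cleanest route is to set $g(x):=f(x)/x^{\alpha}$ on $(0,1]$ and show it is non-decreasing: one computes $g'(x)=\frac{xf'(x)-\alpha f(x)}{x^{\alpha+1}}$, whose numerator is $\ge0$ by the same rearrangement of (C). Thus $g(x)\le g(1)=f(1)=1$ for every $x\in(0,1]$, i.e.\ $f(x)\le x^{\alpha}$, while $x=0$ holds with equality by $f(0)=0$. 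Equivalently one may integrate the logarithmic inequality $\frac{f'(t)}{f(t)}\ge\frac{\alpha}{t}$ from $x$ to $1$ to reach the same bound.

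None of the four parts is genuinely hard; the only points needing care are the boundedness of $f'$ in (i) — where the \emph{finite} semi-derivatives in (B) are essential, since otherwise $f'$ could blow up near an endpoint and spoil the Lipschitz estimate — and the positivity $f(x)>0$ on $(0,1)$, used implicitly in (iii) and (iv) when dividing by $f$. The structural content (domination by $x^{\alpha}$ and monotonicity of $f(x)/x$) comes entirely from condition (C), so (iii) and (iv) are where the hypothesis does the real work, whereas (i) and (ii) are soft-analysis consequences of (A) and (B).
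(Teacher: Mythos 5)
Your proposal is correct and follows essentially the same route as the paper: (i) and (ii) from the finite semi-derivatives in \textbf{(B)} (you merely make explicit, via the mean value theorem, the step $\lim_{x\to 0^+}f(x)/x=\lim_{x\to 0^+}f'(x)$ that the paper leaves implicit), (iii) by the same derivative computation using \textbf{(C)}, and (iv) by integrating $f'(t)/f(t)\ge \alpha/t$ from $x$ to $1$, which is exactly the paper's argument $\log\left(\frac{f(1)}{f(x)}\right)\ge \alpha\log\left(\frac{1}{x}\right)$ (your alternative via monotonicity of $f(x)/x^{\alpha}$ is the same inequality repackaged). No gaps; your added care about $f>0$ on $(0,1)$ and the finiteness of the endpoint limits of $f'$ is consistent with the paper's intent.
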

\begin{proof}

(i) follows immediately from assumption {\bf{(B)}}. For (ii), observe that $f'(0)$ exists finitely is a part of the assumption {\bf{(B)}}, and $f'(0)  \ge 0$ follows from assumption {\bf(A)}.
The statement (iii) follows from the observation that derivative of $\frac{f(x)}{x}$ is given by
\ben
\frac{x f'(x)-f(x)}{x^2} \ge 0, 
\een
from assumption {\bf(C)}. Finally (iv) follows from  {\bf (C)} and 
\[
\log \left (\frac{f(1)}{f(x)} \right )= \int_{x}^{1}{\mathrm{d}(\log f(t))} \ge \alpha \log \left (\frac{1}{x} \right )
\]
for $x\in (0,1)$ and $\alpha$ was defined in \eqref{Eq: Assumption_exponent}.
\end{proof}
A major disadvantage of the class of functions $\mathcal{R}$ is that it does not include functions that decay exponentially, for example $f(x)= \frac{1}{a-1}(a^x -1)$ for $a >0$, as it fails to satisfy {\bf(C)} ($\alpha=1$ instead of $\alpha>1$). It is clear that Theorem \ref{Thm: dominance} should hold even for exponentially decaying functions, since similar result is shown in \cite{mon}, where $\sigma_n\equiv 1$. However, we could not apply the general stochastic approximation techniques for exponentially growing functions and for a general sequence $\left(\sigma_n\right)_{n \ge 0}$.

\subsubsection{Connections to Stochastic Approximation Theory}
In this section, we will connect the proportion vector $\left(\theta _n\right)_{n \ge 0}$ to a standard form of recursive equations of the stochastic approximations (SA) method, see e.g. \cite{Ben1,Ben2,Bor}. One of the most general forms of the recursion equation associated with SA is of the form

\begin{equation}\label{Eq: Standard_SA}
Y_{n+1}=Y_n+\gamma_{n+1}\left[H\left(Y_n, Z_n\right)+r_{n+1}\right], \text{ } Y_0 \in \Rbold^d, \text{ } n \ge 0,
\end{equation} 
where $\left(\gamma_n\right)_{n\geq 0}$ is the sequence of step sizes, 
$\left(Z_n\right)_{n\geq 0}$ is a sequence of i.i.d.\ random vectors and 
$\left(r_n\right)_{n \ge 0}$ is the sequence of "error" or remainder terms and $H:\Rbold^d \times \Rbold^d \rightarrow \Rbold^d$ is measurable function.   

A general practice is to re-write the above equation \eqref{Eq: Standard_SA} in the following form

\begin{equation}\label{Eq: Standard_SA_Martingale}
Y_{n+1}=Y_n+\gamma_{n+1}\left[\Ebold\left[H\left(Y_n, Z_n\right) | Y_0, Z_1, \ldots, Z_n\right]+ \Delta M_{n+1}+r_{n+1}\right],
\end{equation}

where $\Delta M_{n+1} = H\left(Y_n, Z_n\right)- \Ebold\left[H\left(Y_n, Z_n\right) | Y_0, Z_1, \ldots, Z_n\right]$. 

The advantage of such a representation is that under suitable conditions on $H$ and $\left(Z_n\right)_n$, one can relate the asymptotic properties of $Y_n$ to the zeros of the \textit{mean field function}, which we will define and discuss in details for our model. 

Recall our basic recursive equation \eqref{Eq:Fundamental_equation}. We can re-write this equation as follows 
\begin{equation}\label{Eq:fundamental_recursion_martingale_diff}
\theta_{n+1}= \theta_n+\frac{1}{\tau_{n+1}}\left[\Ebold\left[\sum_{i=1}^{\sigma_{n+1}} X_i^{(n+1)} \text{ }\Big \vert \mathcal{F}_n\right] -\sigma_{n+1} \theta _n+ \Delta M_{n+1} \right], 
\end{equation}
where $\mathcal{F}_n$ is the sigma algebra generated by $U_0, U_1. \ldots, U_n$, and 
\begin{equation}\label{def:Delta_Mn}
\Delta M_{n+1}= \sum_{i=1}^{\sigma_{n+1}} X_i^{(n+1)}- \Ebold\left[\sum_{i=1}^{\sigma_{n+1}} X_i^{(n+1)} \text{ }\Big \vert \mathcal{F}_n\right].
\end{equation}

Let us define the function $f_d : \Rbold^d \rightarrow \Rbold^d$ as follows $y \mapsto \left(f(y_1), f(y_2), \ldots, f(y_d)\right)$ for any vector $y=(y_1, y_2, \ldots, y_d ) \in \Rbold^d$ and $f\in \mathcal{R}$. It is easy to see that 
\begin{equation*}\label{Eq: conditional_expectation_for_choice_vector}
\Ebold\left[\sum_{i=1}^{\sigma_{n+1}} X_i^{(n+1)} \text{ }\Big \vert \mathcal{F}_n\right]=\sigma_{n+1} \frac{f_d(\theta_n)}{\|f_d (\theta_n) \|_1}.
\end{equation*}
An immediate consequence of the construction is the following corollary.

\begin{cor}
Given $\mathcal{F}_n$, the d-dimensional random vector $\sum_{i=1}^{\sigma_{n+1}} X_i^{(n+1)}$ follows a Multinomial distribution with parameters $\left(\sigma_{n+1}, \frac{f_d(\theta_n)}{\|f_d (\theta_n) \|_1}\right)$, i.e. for $x=(x_1,\cdots, x_d) \in \mathbb{N}^d$ such that $\sum_{i=1}^d x_i=\sigma_{n+1}$,
\[
\mathbb{P} \left ( \sum_{i=1}^{\sigma_{n+1}} X_i^{(n+1)} = x \right ) = \frac{\sigma_{n+1} !}{x_1!\ldots x_k!} \frac{f(\theta_{n, 1})^{x_1}}{\| f_d(\theta_n)\|_1} \cdots \frac{f(\theta_{n, d})^{x_d}}{\| f_d(\theta_n)\|_1}.
\]
\end{cor}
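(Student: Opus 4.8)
The plan is to recognize the corollary as an immediate consequence of the construction \eqref{Eq:choice_of_color}--\eqref{Eq:Fundamental_equation}: conditionally on $\mathcal{F}_n$, the increment $\sum_{i=1}^{\sigma_{n+1}} X_i^{(n+1)}$ is the count vector of $\sigma_{n+1}$ conditionally i.i.d.\ categorical trials, and such a count vector is multinomial by definition. First I would fix a realization of $\mathcal{F}_n$, so that $\theta_n = U_n/\tau_n$ (which is $\mathcal{F}_n$-measurable) and hence the single-trial probabilities
\[
p_j := \frac{f(\theta_{n,j})}{\sum_{k=1}^d f(\theta_{n,k})} = \frac{f(\theta_{n,j})}{\|f_d(\theta_n)\|_1}, \qquad 1 \le j \le d,
\]
are determined. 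Since $f \ge 0$ and the $p_j$ sum to $1$, the vector $(p_1,\dots,p_d)$ is a genuine probability vector, so $\mathrm{Multinomial}(\sigma_{n+1}; p_1,\dots,p_d)$ is well defined.

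By \eqref{Eq:choice_of_color} each $X_i^{(n+1)}$ equals the one-hot vector $e_j$ with conditional probability $p_j$, and the vectors $(X_i^{(n+1)})_{1\le i\le\sigma_{n+1}}$ are conditionally i.i.d.\ given $\mathcal{F}_n$. For a fixed $x=(x_1,\dots,x_d)\in\mathbb{N}^d$ with $\sum_{j=1}^d x_j=\sigma_{n+1}$, I would write the event $\{\sum_{i=1}^{\sigma_{n+1}} X_i^{(n+1)}=x\}$ as the disjoint union, over all ways of assigning a color to each of the $\sigma_{n+1}$ trials so that exactly $x_j$ trials receive color $j$, of the corresponding elementary events. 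Each such elementary event has conditional probability $\prod_{j=1}^d p_j^{x_j}$ by conditional independence, and the number of admissible assignments is the multinomial coefficient $\sigma_{n+1}!/(x_1!\cdots x_d!)$. Summing gives
\[
\mathbb{P}\Bigl(\sum_{i=1}^{\sigma_{n+1}} X_i^{(n+1)}=x \,\Big\vert\, \mathcal{F}_n\Bigr)=\frac{\sigma_{n+1}!}{x_1!\cdots x_d!}\prod_{j=1}^d \Bigl(\frac{f(\theta_{n,j})}{\|f_d(\theta_n)\|_1}\Bigr)^{x_j},
\]
which is exactly the $\mathrm{Multinomial}\bigl(\sigma_{n+1};\,f_d(\theta_n)/\|f_d(\theta_n)\|_1\bigr)$ mass function claimed.

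There is no genuine obstacle here: the statement simply records the defining property of the multinomial law. The only two points that need care are that the independence of the trials is \emph{conditional} on $\mathcal{F}_n$ (so the combinatorial factorization of probabilities is applied under $\mathbb{P}(\cdot \mid \mathcal{F}_n)$), and that $\theta_n$, being $\mathcal{F}_n$-measurable, may be treated as deterministic throughout the computation. I would also note in passing that the denominators in the displayed formula of the corollary statement should each carry the exponent $x_j$, as in the product form above; this is a purely typographical point and does not affect the multinomial identification that the corollary asserts.
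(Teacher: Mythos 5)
Your proposal is correct and matches the paper's treatment: the paper offers no written proof, calling the corollary ``an immediate consequence of the construction,'' and your argument simply spells out that immediacy — conditionally on $\mathcal{F}_n$ the trials are i.i.d.\ categorical with probabilities $f(\theta_{n,j})/\|f_d(\theta_n)\|_1$, so the count vector is multinomial by the standard combinatorial decomposition. Your side remark is also right: the displayed formula in the paper has a typographical slip (the factors should read $\bigl(f(\theta_{n,j})/\|f_d(\theta_n)\|_1\bigr)^{x_j}$, and the multinomial coefficient should be $\sigma_{n+1}!/(x_1!\cdots x_d!)$ rather than ending at $x_k!$), neither of which affects the statement's substance.
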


\noindent
Re-writing \eqref{Eq:fundamental_recursion_martingale_diff}, we have 
\begin{eqnarray}\label{Eq:fundamental_recursion_mean_field}
\nonumber \theta_{n+1} & = & \theta_n+\frac{1}{\tau_{n+1}}\left[ \sigma_{n+1} \frac{f_d(\theta_n)}{\|f_d (\theta_n) \|_1}-\sigma_{n+1} \theta _n+ \Delta M_{n+1}\right]\\
& =& \theta_n+\frac{\sigma_{n+1}}{\tau_{n+1}}\left[ h(\theta_n)+ \frac{1}{\sigma_{n+1}}\Delta M_{n+1}\right],
\end{eqnarray}
 where we define $h:\Rbold^d_+ \setminus \{0 \}\rightarrow \Rbold^d_+$ by $y \mapsto \left(\frac{f_d(y)}{\|f_d (y) \|_1}- y \right)$. This function $h$ is well-defined as $f$ is strictly non-decreasing and $f(0)=0$. $h$ will be the \textit{mean field function}, and we later see the relations of $h$ to the limit points of the sequence $\left( \theta _n\right)_{n \ge 0}$. Comparing \eqref{Eq:fundamental_recursion_mean_field} with \eqref{Eq: Standard_SA_Martingale}, we observe that it is not exactly in the form of the standard recursive equation as in practice in SA, due to the presence of the coefficient $\frac{1}{\sigma_{n+1}}$which is multiplied with the martingale difference $\Delta M_{n+1}$. We present all necessary results related to SA required for this paper in the Appendix \ref{sec:app}.

\section{Results}\label{sec:results}
In this section we will present our main results.
 
\begin{thm}\label{Thm: dominance}
Suppose that  $\left(\theta_n\right)_{n \ge 0}$ is as in \eqref{Eq:fundamental_recursion_mean_field}. Assume the following conditions:
\begin{itemize}
\item [(i)] Let $\left(\sigma_n\right)_{n \ge 1}$ be such that $\sum_{n \ge 1}\frac{\sigma_n}{\tau_n} = \infty$ and $\sum_{n \ge 1} \left (\frac{\sigma_n}{\tau_n} \right )^2 < \infty$,
\item[(ii)] $f \in \mathcal{R}$, 
\end{itemize}
then we have that
\[
\mathbb{P}(\mathscr{D}) =1,
\] 
where $\mathscr{D}$ was defined in Definition \ref{def:dom}.
\end{thm}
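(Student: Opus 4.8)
The plan is to read \eqref{Eq:fundamental_recursion_mean_field} as a stochastic approximation scheme with step sizes $\gamma_{n+1}=\sigma_{n+1}/\tau_{n+1}$, mean field $h$, and noise $\xi_{n+1}=\sigma_{n+1}^{-1}\Delta M_{n+1}$, and to extract dominance from the large-time behaviour of the ODE $\dot y=h(y)$ on the simplex $\Delta=\{y\in\mathbb{R}^d_+:\sum_i y_i=1\}$. First I would record that condition (i) is exactly $\sum_n\gamma_n=\infty$ and $\sum_n\gamma_n^2<\infty$, that $h$ is Lipschitz on $\Delta$ (using that $f$ is Lipschitz by Lemma \ref{Lemma: properties_of_f}(i) and that $\|f_d(y)\|_1\ge f(1/d)>0$ on $\Delta$), and that $\xi_{n+1}$ is a martingale difference with $\|\xi_{n+1}\|\le 2$. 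Since then $\sum_n\gamma_n^2\|\xi_{n+1}\|^2<\infty$, the perturbation is summable and the results of the Appendix apply, so that the limit set of $(\theta_n)$ is almost surely a compact, connected, internally chain transitive set for the flow of $\dot y=h(y)$.

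The next step is to understand this flow through a Lyapunov function. I would introduce $V(y)=\sum_{i=1}^d\Psi(y_i)$ with $\Psi'(x)=\log(f(x)/x)$, which is well defined and continuous on $[0,1]$ because $\log(f(x)/x)\le(\alpha-1)\log x$ is integrable at $0$ by Lemma \ref{Lemma: properties_of_f}(iv), and which is convex since $\Psi''(x)=(xf'(x)-f(x))/(xf(x))>0$ by {\bf(C)}. Writing $p_i(y)=f(y_i)/\|f_d(y)\|_1$ and using $\sum_i(p_i-y_i)=0$, a short computation gives
\[
\tfrac{d}{dt}V(y)=\sum_{i=1}^d\log\!\big(p_i(y)/y_i\big)\,\big(p_i(y)-y_i\big)\ge 0,
\]
with equality if and only if $p_i(y)=y_i$ for all $i$, i.e.\ if and only if $y$ is an equilibrium of $h$. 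Hence $V$ is a strict Lyapunov function, the flow is gradient-like, and every internally chain transitive set lies in the equilibrium set $E=\{h=0\}$. Because $x\mapsto f(x)/x$ is \emph{strictly} increasing (Lemma \ref{Lemma: properties_of_f}(iii), strict by {\bf(C)}), an equilibrium must have all of its nonzero coordinates equal, so $E$ is the finite family of points uniform on some nonempty $S'\subseteq S$ and zero elsewhere. A connected subset of a finite set is a singleton, so $(\theta_n)$ converges almost surely to a single point of $E$.

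It then remains to exclude the non-vertex equilibria. Here I would compute the Jacobian at an equilibrium $y^*$ uniform on a set $S'$ with $|S'|=k\ge 2$: from $f(y^*_i)=y^*_i\|f_d(y^*)\|_1$ and {\bf(C)} one gets $f'(y^*_i)\ge\alpha\|f_d(y^*)\|_1$, and the rank-one structure $Dg=\mathrm{diag}(a_i)-y^*a^\top$, with $a_i=f'(y^*_i)/\|f_d(y^*)\|_1\ge\alpha$, shows via the secular equation that, apart from the eigenvalue $0$ in the direction transverse to $\Delta$, all eigenvalues of $Dg$ on the tangent space of the face spanned by $S'$ exceed $\alpha$. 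Consequently $Dh=Dg-I$ has an eigenvalue $>\alpha-1>0$ along a direction $u$ tangent to that face, so every non-vertex equilibrium is linearly unstable; this is precisely where \eqref{Eq: Assumption_exponent} is used decisively.

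Finally, to turn instability into non-convergence I would project the recursion onto $u$: linearising around $y^*$ gives $\langle\theta_{n+1}-y^*,u\rangle\approx(1+\gamma_{n+1}\lambda_+)\langle\theta_n-y^*,u\rangle+\text{(noise)}$, and since $\sum_n\gamma_n=\infty$ the amplification factors $\prod_k(1+\gamma_k\lambda_+)$ diverge, forcing the process out of any small neighbourhood of $y^*$ infinitely often. The main obstacle is exactly this step: because $\sigma_n$ may tend to infinity, the conditional covariance of the effective noise is of order $\sigma_{n+1}^{-1}\Sigma(p)$ and may vanish, so the classical non-convergence-to-unstable-equilibria theorems (which require the noise to be uniformly exciting in the unstable direction) do not apply off the shelf. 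Extending the argument of \cite{Si-2018}, I would instead prove by a direct second-moment and stopping-time analysis that the process moves away from $y^*$ infinitely often almost surely, whence $\mathbb{P}(\theta_n\to y^*)=0$. Taking the union over the finitely many non-vertex equilibria and combining with the convergence-to-a-single-equilibrium of the second step yields $\theta_n\to e_i$ for some random $i$ almost surely, that is $\mathbb{P}(\mathscr{D})=1$.
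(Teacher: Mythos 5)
Your overall architecture is the same as the paper's: identify the equilibria of $h$ (all nonzero coordinates equal, as in Lemma \ref{lem:eqpoints}), use Bena\"{i}m-type stochastic approximation plus a strict Lyapunov function to get almost sure convergence to a single equilibrium (Lemma \ref{lem:conv}, Corollary \ref{cor:conveqpoints}), and then rule out the non-vertex equilibria via instability driven by assumption {\bf(C)}. However, there is a genuine flaw in your Lyapunov step: the function $V(y)=\sum_i \Psi(y_i)$ with $\Psi'(x)=\log(f(x)/x)$ need not be well defined on the closed simplex for general $f\in\mathcal{R}$. Your justification only controls the \emph{positive} part via $\log(f(x)/x)\le(\alpha-1)\log x$; the class $\mathcal{R}$ gives no lower bound on $f(x)/x$ near $0$. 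Concretely, $f(x)=x^{2}e^{1-1/x}$ satisfies {\bf(A)}, {\bf(B)} and {\bf(C)} with $\inf_x xf'(x)/f(x)=3$, yet $\log(f(x)/x)=1+\log x-\tfrac{1}{x}$ is not integrable at $0$, so your $\Psi$ is identically $-\infty$ (or, if anchored at an interior point, blows up at the boundary — exactly where the limit points live, so continuity of $V$ on the compact set, needed for Proposition \ref{prop:3.2}, fails). The fix is the paper's choice in Lemma \ref{lem:Lya}: take $F(y)=\sum_i\int_0^{y_i}\frac{f(z)}{z}\,dz$, whose integrand is bounded by Lemma \ref{Lemma: properties_of_f}(ii)--(iii), and verify strict monotonicity of $F$ along the flow by Jensen's inequality; with that substitution your second step goes through verbatim.

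The second, larger issue is that your final step — the only place the positive-reinforcement condition does real work — is a plan rather than a proof, and the inference ``moves away from $y^*$ infinitely often, whence $\Pbold(\theta_n\to y^*)=0$'' conceals precisely the difficulty. Because the conditional variance of the effective noise per unit step is $O(1/\sigma_{n+1})$ and may degenerate (as you correctly observe), the paper's Proposition \ref{Lemma: e_unstable_almost_surely} only establishes escape, infinitely often, from balls of \emph{shrinking} radius $\delta_n\to 0$ with $\sum_n \delta_n\sigma_{n+1}/\tau_{n+1}<\infty$; this by itself is compatible with $\theta_n\to y^*$ at a slower rate, so a further argument is required. The paper supplies it in Proposition \ref{lem:contrivial}: on the positive-probability event that $\theta_{\eta,1}<\tfrac1k-\delta_\eta$ while the martingale tail stays below $\delta_\eta/2$ (an $L^2$ estimate), the sign of the drift ($\Psi(x)<x$ on one side of $1/k$) traps $\theta_{n,1}$ below $\tfrac1k-\delta_\eta/2$ forever, forcing the remainder $R_{n,1}$ to be simultaneously positive and $\le-\delta_\eta/2$ — a contradiction. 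Your sketch would need both this trapping step and the quantitative escape estimate (which in the paper requires a case analysis: when $\sup_n\sigma_n<\infty$ and the coordinate-wise factor is contracting, a characteristic-function/normal-approximation argument shows the fluctuation scale dominates $\delta_n$ along a subsequence). One genuinely nice feature of your sketch: projecting onto an eigenvector tangent to the face spanned by $S'$ gives the uniform expansion rate $\alpha(1/k)-1>0$, whereas the paper tracks a single coordinate and obtains the factor $\frac{k-1}{k}\alpha(1/k)$, which can be $<1$ and is what forces its case distinctions; executing your eigen-direction version could streamline Step I--II of the paper's Proposition \ref{Lemma: e_unstable_almost_surely}, but the noise analysis and the shrinking-ball-to-non-convergence conversion would still have to be carried out.
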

\begin{thm}\label{Thm: fixation}
Under the assumptions of Theorem \ref{Thm: dominance} we have that
\begin{equation}\label{Eq: Fixation_happens}
\Pbold(\mathscr{F})=1,
\end{equation}
where $\mathscr{F}$ was defined in Definition \ref{def:fix}.
 \end{thm}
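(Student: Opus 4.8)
The plan is to deduce fixation from dominance (Theorem~\ref{Thm: dominance}) by showing that, once a color dominates, the total mass deposited on all other colors is finite. I work on the dominance event and let $i$ be the (random) dominant color, so $\theta_{n,i}\to 1$. Set $W_n=\sum_{k\neq i}U_{n,k}=\tau_n-U_{n,i}$; since $W_n$ is non-decreasing and integer valued, it suffices to prove $W_\infty:=\lim_n W_n<\infty$ almost surely, because a finite limit of an integer sequence is eventually constant, which is exactly the statement that eventually every added ball lands on color $i$. Fix a small $\delta\in(0,1/2)$ and, for a level $N$, introduce the stopping time $\tau_\delta=\inf\{n\ge N:\theta_{n,i}\le 1-\delta\}$; on $\{\tau_\delta=\infty\}$ we have $\theta_{n,i}>1-\delta$ and hence $\theta_{n,k}<\delta$ for every $k\neq i$ and all $n\ge N$. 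The basic one-step estimate is that, given $\mathcal{F}_n$, the number of non-dominant balls added at step $n+1$ has conditional mean $\sigma_{n+1}\sum_{k\neq i}p_{n,k}$ with $p_{n,k}=f(\theta_{n,k})/\sum_\ell f(\theta_{n,\ell})$, and on $\{n<\tau_\delta\}$ monotonicity of $f$ together with Lemma~\ref{Lemma: properties_of_f}(iv) gives $p_{n,k}\le f(\theta_{n,k})/f(1-\delta)\le C\,\theta_{n,k}^{\alpha}$ with $C=1/f(1-\delta)$.

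The heart of the argument, and the main obstacle, is a bootstrap that breaks the obvious circularity: to bound the mass on $k\neq i$ one wants $\theta_{n,k}$ to be small, but that is what is being proved. I would run it in two stages on the stopped process $\widetilde W_n=W_{n\wedge\tau_\delta}$. In the first (crude) stage, bound $\sum_{k\neq i}\theta_{n,k}^{\alpha}\le\delta^{\alpha-1}\sum_{k\neq i}\theta_{n,k}=\delta^{\alpha-1}W_n/\tau_n$ to obtain $\mathbb{E}[\widetilde W_{n+1}\mid\mathcal{F}_n]\le\widetilde W_n\bigl(1+\gamma\,\sigma_{n+1}/\tau_n\bigr)$ with $\gamma=C\delta^{\alpha-1}$. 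Then $\widetilde W_n/P_n$ is a non-negative supermartingale, where $P_n=\prod_{m=N}^{n-1}(1+\gamma\sigma_{m+1}/\tau_m)$, so it converges and is almost surely bounded. Since $\sum_{m=N}^{n-1}\sigma_{m+1}/\tau_m=\log\tau_n+O(1)$ (a consequence of $\tau_{m+1}/\tau_m=1+\sigma_{m+1}/\tau_m$ and conditions (i)--(ii)), one gets $P_n\le \mathrm{const}\cdot\tau_n^{\gamma}$, and hence the a priori polynomial bound $\widetilde W_n\le L\,\tau_n^{\gamma}$ with $L<\infty$ almost surely. Crucially $\gamma=\delta^{\alpha-1}/f(1-\delta)\to 0$ as $\delta\to 0$, so $\delta$ can be chosen at the outset so that $\gamma<1-1/\alpha$.

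In the second stage I would feed this polynomial bound back into the one-step estimate. On $\{\tau_\delta=\infty\}$ we now have $\theta_{n,k}\le W_n/\tau_n\le L\,\tau_n^{\gamma-1}$, so $\theta_{n,k}^{\alpha}\le L^{\alpha}\tau_n^{\alpha(\gamma-1)}$ and the compensator satisfies $\sum_n\mathbb{E}[W_{n+1}-W_n\mid\mathcal{F}_n]\le C(d-1)L^{\alpha}\sum_n\sigma_{n+1}\tau_n^{\alpha(\gamma-1)}$. Because $\gamma<1-1/\alpha$ the exponent obeys $\alpha(\gamma-1)<-1$, and a Riemann-sum comparison (valid since $\sigma_{n+1}/\tau_n\to 0$ and $\tau_n\to\infty$) shows $\sum_n\sigma_{n+1}\tau_n^{\alpha(\gamma-1)}<\infty$; this summability fact, for any exponent exceeding $1$, is the only place conditions (i)--(ii) enter beyond the dominance input. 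A finite compensator for a process with non-negative increments forces $W_\infty<\infty$, giving fixation at $i$. To keep all conditional estimates adapted I would not condition on the tail event $\{\tau_\delta=\infty\}$ directly but localize by the further stopping time $\rho_m=\inf\{n\ge N:W_n>m\,\tau_n^{\gamma}\}$: on the stopped process $W_{\cdot\,\wedge(\tau_\delta\wedge\rho_m)}$ the bound above is unconditional, Markov and monotone convergence give $\mathbb{E}[W_{\tau_\delta\wedge\rho_m}-W_N]<\infty$, and letting $m\to\infty$ recovers the claim on $\{\tau_\delta=\infty\}$ (the polynomial bound guarantees $\rho_m=\infty$ for $m$ large there).

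Finally I would remove the auxiliary parameters. For the fixed small $\delta$, dominance gives $\Omega=\bigcup_i\{\theta_n\to e_i\}$ up to a null set, and $\{\theta_n\to e_i\}\subseteq\bigcup_{N}\{\theta_{n,i}>1-\delta\ \forall n\ge N\}=\bigcup_N\{\tau_\delta=\infty\}$; hence almost every realization lies in some $\{\tau_\delta=\infty\}$ with a definite dominant color, on which $W_\infty<\infty$. This yields $\mathbb{P}(\mathscr{F})=1$. The two technical points to get right are the bootstrap threshold $\gamma<1-1/\alpha$, where condition~\eqref{Eq: Assumption_exponent} (that is, $\alpha>1$) is essential, and the stopping-time localization that legitimizes the conditional estimates on a tail event.
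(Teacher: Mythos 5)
Your proof is correct, but it takes a genuinely different route from the paper's. The paper argues by contradiction, one non-dominant color at a time: on $\{\theta_{n,1}\to 0\}$ it assumes $U_{n,1}\to\infty$, compares the telescoping sum $\sum_i (U_{i+1,1}-U_{i,1})/U_{i,1}^{\alpha}$ with $\int x^{-\alpha}\,\mathrm{d}x$ as in \eqref{Eq: Integrability_of_reinforcement_function}, bounds the conditional means by $C_d\,\sigma_{n+1}\theta_{n,1}^{\alpha}$ via Lemma \ref{Lemma: properties_of_f}(iv) and \eqref{Eq: Bound_denominator_psi}, and then must control the binomial fluctuations explicitly: it isolates the noise term $T_n$ of \eqref{Eq: T_n} and shows $T_n\to 0$ through an $L^2$-bounded martingale $N_n$, arriving at the contradiction $C\le C'\theta_{n,1}^{\alpha-1}+T_n\to 0$. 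You instead aggregate all non-dominant mass into the monotone process $W_n$, run a Gronwall-type supermartingale bound to get the a priori estimate $W_n=\OO(\tau_n^{\gamma})$ with $\gamma=\delta^{\alpha-1}/f(1-\delta)<1-1/\alpha$, and bootstrap this into a summable predictable compensator, so that $W_\infty<\infty$ follows directly (your stopping-time localization by $\tau_\delta$ and $\rho_m$ is the right way to make the conditional estimates legitimate, and the integer-valuedness of $W_n$ converts $W_\infty<\infty$ into fixation). The two proofs share the same two pillars — $f(x)\le x^{\alpha}$ with $\alpha>1$, and the Riemann-sum estimate $\sum_n\sigma_{n+1}\tau_n^{-\beta}<\infty$ for $\beta>1$, cf.\ \eqref{Eq: sum_over_inverse_tau_to_integral} — but your exploitation of monotonicity of $W_n$ lets you dispense entirely with the explicit fluctuation analysis ($\eps_i$, $T_n$, the $L^2$ martingale $N_n$), which is arguably more robust, and your intermediate bound $W_n=\OO(\tau_n^{\gamma})$ for arbitrarily small $\gamma$ is quantitative information of independent interest (onset of fixation, in the spirit of \cite{Oliviera_2009}); the paper's contradiction argument, in exchange, needs no smallness parameter $\delta$ and no two-stage bootstrap, reusing estimates already set up for Theorem \ref{Thm: dominance}. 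One cosmetic overstatement: the summability of $\sum_n\sigma_{n+1}\tau_n^{-\beta}$ is not quite the only place conditions (i)--(ii) enter beyond dominance — your Stage 1 identity $\sum_{m=N}^{n-1}\sigma_{m+1}/\tau_m=\log\tau_n+O(1)$, hence $P_n\le c\,\tau_n^{\gamma}$, also uses $\sum_m(\sigma_{m+1}/\tau_m)^2<\infty$ and $\sigma_{m+1}/\tau_m\to 0$, which follow from (ii) via \eqref{Eq: tau_n+1_by_sigma} — but this does not affect the validity of the argument.
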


\begin{rem}\label{Rem: fixation versus dominance}
Ideally, Theorem \ref{Thm: dominance} should be stated as a corollary of Theorem \ref{Thm: fixation}. 
However, we state the theorems in reverse order. This is because, we will prove Theorem \ref{Thm: fixation} using Theorem \ref{Thm: dominance}. 
\end{rem}

\section{Proofs}\label{sec:proofs}

The proof of Theorem \ref{Thm: dominance} will be divided into four parts. The first Lemma \ref{lem:eqpoints} will identify equilibrium points for the mean-field function $h$. The \textit{equilibrium set} of a function $h$ is the set $$\mathcal{E}(h) = \{ y\in [0,1]^d: h(y) =0\}.$$ We call \textit{trivial equilibrium points} the standard orthonormal basis of $\mathbb{R}^d$, $\{e_1,...,e_d\}$. Lemma \ref{lem:conv} will ensure that the process $(\theta_n)_{n\geq 0}$ converges and the possible limit points are given by the equilibrium points of the mean-field function $h$. Proposition \ref{Lemma: e_unstable_almost_surely} will show that almost surely the process will infinitely often move away by a small enough distance from the non-trivial equilibrium points. 
Finally Lemma \ref{lem:contrivial} will conclude that the only possible limit points are the trivial equilibrium points.

\begin{lem}\label{lem:eqpoints}
Let $f \in \mathcal{R}$. Then the equilibrium set is equal to
\be \label{Eq: Equilibrium_points}
\mathcal{E}(h) =\left \{ y \in V: y_j = \frac{1}{d-|I|} \text{ for } j\in I^c \text{ and } y_i=0 \text{ for } i \in I \right\}, 
\ee
where 
$I=\{i\in \{1,...,d\}: y_i=0 \}$.

\end{lem}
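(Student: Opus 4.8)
The plan is to solve the fixed-point equation $h(y)=0$ directly, coordinate by coordinate, exploiting the strict monotonicity of $x\mapsto f(x)/x$ that comes out of assumption \textbf{(C)}. First I would unpack the definition of $h$: since $h(y)=\frac{f_d(y)}{\|f_d(y)\|_1}-y$, the condition $h(y)=0$ is equivalent to
\[
\frac{f(y_k)}{\sum_{j=1}^d f(y_j)} = y_k \qquad \text{for every } k \in \{1,\dots,d\}.
\]
Summing this over $k$ shows $\sum_k y_k = 1$ automatically (the left-hand side is a probability vector), so every zero of $h$ already lies on the simplex $V$; this is why the statement can be phrased with $y \in V$ even though $\mathcal{E}(h)$ is defined on $[0,1]^d$.

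Next I would split the coordinates according to whether $y_k = 0$. Because $f(0)=0$ by \textbf{(A)}, the equation holds trivially on $I = \{k : y_k = 0\}$. For $k \in I^c$ (where $y_k > 0$) the equation rewrites as
\[
\frac{f(y_k)}{y_k} = \sum_{j=1}^d f(y_j) =: C,
\]
a constant independent of $k$. Thus every nonzero coordinate satisfies $g(y_k) = C$, where $g(x) := f(x)/x$.

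The key step is injectivity of $g$ on $(0,1]$. From the computation in Lemma \ref{Lemma: properties_of_f}(iii) the derivative of $g$ is $\frac{x f'(x) - f(x)}{x^2}$, and assumption \textbf{(C)} gives $x f'(x) \ge \alpha f(x) > f(x)$, using $\alpha > 1$ and $f(x) > 0$ for $x \in (0,1)$. Hence $g$ is \emph{strictly} increasing, and by continuity on $(0,1]$ it is injective there. Consequently all nonzero coordinates coincide, say $y_k = v$ for $k \in I^c$, and the constraint $\sum_k y_k = 1$ forces $(d - |I|)\,v = 1$, i.e. $v = \frac{1}{d - |I|}$, which is exactly the asserted form (the case $|I| = d-1$ recovering the trivial equilibria $e_i$, where $v = 1$).

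For the converse inclusion I would verify that any point of the stated form is a zero of $h$: the zero coordinates satisfy the equation trivially, while on the support all coordinates equal $v = \frac{1}{d - |I|}$, so that $\frac{f(y_k)}{\sum_j f(y_j)} = \frac{f(v)}{(d-|I|)\,f(v)} = \frac{1}{d - |I|} = y_k$. The only real subtlety is the strictness upgrade of Lemma \ref{Lemma: properties_of_f}(iii): the conclusion that all nonzero coordinates are equal relies on $\alpha > 1$ rather than $\alpha \ge 1$, so it is precisely assumption \textbf{(C)}, and not mere convexity of $f$, that determines the equilibrium set.
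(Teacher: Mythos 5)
Your proof is correct and takes essentially the same route as the paper: both reduce $h(y)=0$ to the statement that $\frac{f(y_k)}{y_k}=\|f_d(y)\|_1$ is constant over the nonzero coordinates and then invoke the strict monotonicity of $x\mapsto \frac{f(x)}{x}$ coming from assumption \textbf{(C)} (the paper via a contradiction with $y_1>y_2$ and Lemma \ref{Lemma: properties_of_f}(iii), you via injectivity of $g(x)=f(x)/x$, which is the same fact). Your explicit verification of the converse inclusion and your remark that the needed \emph{strict} increase of $g$ rests on $\alpha>1$ are minor but accurate refinements of the paper's argument.
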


\begin{proof}
To find the equilibrium points of $h$, we need to find all solutions $y \in \{y \in \Rbold^d_+ \setminus\{0\}: \sum_{i=1}^d y_i=1\}$ for the set of self-consistency equations given by 

\begin{equation}
\label{Eq: self_similar}
\begin{split}
y_1  = \frac{f(y_1)}{\| f_d(y)\|_1}, \cdots, \,  y_d  = \frac{f(y_d)}{\| f_d(y)\|_1}.
\end{split}
\end{equation}

Since $y \in \{y \in \Rbold^d_+ \setminus\{0\}: \sum_{i=1}^d y_i=1\}$, it follows immediately that there exists some $i$, such that $y_i>0$, and thus $\| f_d(y)\|_1 >0 $. It is  clear that the unit coordinate vectors $e_i$ for $1 \le i \le d$ are equilibrium points by assumption {\bf(A)}. 

So let us assume that $y_1,..., y_k \neq 0$ for some $2 \le k \le d$. Observe that to prove \eqref{Eq: Equilibrium_points}, it is enough to show $y_1=y_2=\ldots= y_k= \frac{1}{k}$. The previous set of equations \eqref{Eq: self_similar} can be written as
\begin{equation}\label{Eq: deducing_equilibrium_points}
\begin{split}
\| f_d(y)\|_1  = \frac{f(y_1)}{y_1}, \cdots, \, \| f_d(y)\|_1 = \frac{f(y_k)}{y_k}, 
\end{split}
\end{equation}
and $f(y_i)=0$ for all $(k+1) \le i \le d$ by assumption {\bf(A)}. W.l.o.g., we may assume that $y_1 > y_2$. Then by Lemma \ref{Lemma: properties_of_f} (iii), we know that $\frac{f(y_1)}{y_1} > \frac{f(y_2)}{y_2}$, which is a contradiction to \eqref{Eq: deducing_equilibrium_points}. This shows that $y_1=y_2$, and in particular, $y_1=y_2=\ldots=y_k$ and also $y_1=y_2=\ldots=y_k=\frac{1}{k}$, since 
$\sum_{i=1}^k y_i =1$.

\end{proof}

Note that the statement of  Lemma \ref{lem:eqpoints} is similar to Proposition 2.4 from \cite{LauPa}. They prove the statement under the assumption that $f$ is concave or convex, which we do not need here. The following lemma is a consequence of Corollary \ref{cor:conveqpoints} from the appendix and Lemma \ref{lem:eqpoints}.

\begin{lem}\label{lem:conv}
Let $(\theta_n)_{n\geq 0}$ be a stochastic process defined by the recursion
\[
\theta_{n+1} = \theta_n+\frac{\sigma_{n+1}}{\tau_{n+1}}\left[ h(\theta_n)+ \frac{1}{\sigma_{n+1}}\Delta M_{n+1} \right].
\]
Then 
\[
\theta_n \overset{a.s.}\longrightarrow \theta^* 
\]
as $n\rightarrow \infty$, where $\theta^* \in \mathcal{E}(h)$.
\end{lem}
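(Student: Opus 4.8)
The plan is to recognize the recursion as a special case of the stochastic approximation scheme \eqref{Eq: Standard_SA_Martingale} and then invoke the general convergence result Corollary \ref{cor:conveqpoints} from the appendix. Matching terms, the step size is $\gamma_{n+1}:=\sigma_{n+1}/\tau_{n+1}$, the mean field is the function $h$ from \eqref{Eq:fundamental_recursion_mean_field}, there is no genuine remainder term (so $r_{n+1}\equiv 0$), and the ``noise'' is the rescaled martingale increment $\xi_{n+1}:=\sigma_{n+1}^{-1}\Delta M_{n+1}$, with $\Delta M_{n+1}$ as in \eqref{def:Delta_Mn}. The only nonstandard feature, as already flagged after \eqref{Eq:fundamental_recursion_mean_field}, is precisely the factor $\sigma_{n+1}^{-1}$ attached to the martingale difference.

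First I would verify the hypotheses of Corollary \ref{cor:conveqpoints}. The step-size conditions $\sum_n \gamma_n=\infty$ and $\sum_n \gamma_n^2<\infty$ are exactly assumption (i) of Theorem \ref{Thm: dominance}; in particular $\gamma_n\to 0$. The field $h$ is continuous, in fact Lipschitz, on the simplex, since $f$ is Lipschitz by Lemma \ref{Lemma: properties_of_f}(i) and $\|f_d(\cdot)\|_1$ is bounded below by a positive constant there. By construction $\Delta M_{n+1}$ is an $\mathcal{F}_n$-martingale difference, and since $\sum_{i=1}^{\sigma_{n+1}} X_i^{(n+1)}$ is, conditionally on $\mathcal{F}_n$, a sum of $\sigma_{n+1}$ i.i.d.\ unit vectors, one gets $\mathbb{E}[\|\Delta M_{n+1}\|^2\mid\mathcal{F}_n]=O(\sigma_{n+1})$. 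Hence $\mathbb{E}[\|\xi_{n+1}\|^2\mid\mathcal{F}_n]=O(\sigma_{n+1}^{-1})$ is uniformly bounded, so that $\sum_n \gamma_n^2\,\mathbb{E}[\|\xi_{n+1}\|^2\mid\mathcal{F}_n]<\infty$; note that the factor $\sigma_{n+1}^{-1}$ only helps. Finally the iterates stay in the compact simplex and are therefore bounded almost surely.

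With these verified, Corollary \ref{cor:conveqpoints} yields that almost surely the limit set of $(\theta_n)$ is contained in the zero set $\mathcal{E}(h)$ of the mean field. To upgrade ``convergence to the equilibrium set'' into ``convergence to a single point'' I would combine two observations. On the one hand, Lemma \ref{lem:eqpoints} shows that $\mathcal{E}(h)$ is a \emph{finite} set of isolated points. On the other hand, the increments $\theta_{n+1}-\theta_n=\gamma_{n+1}\bigl(h(\theta_n)+\xi_{n+1}\bigr)$ tend to $0$ almost surely: indeed $\gamma_{n+1}h(\theta_n)\to 0$ since $\gamma_n\to 0$ and $h$ is bounded, while the $L^2$ bound above makes the martingale $\sum_n\gamma_n\xi_n$ converge almost surely, forcing $\gamma_n\xi_n\to 0$. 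A bounded sequence with vanishing increments has a connected limit set; being contained in the discrete set $\mathcal{E}(h)$, that limit set must reduce to a singleton. Therefore $\theta_n\to\theta^*$ almost surely for some (random) $\theta^*\in\mathcal{E}(h)$.

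I expect the main obstacle to be the noise analysis under a variable sequence $(\sigma_n)$: one must check that the rescaling by $\sigma_{n+1}^{-1}$, together with $\sum_n\gamma_n^2<\infty$, is exactly what is needed to fit this perturbed scheme into the framework of the appendix, and, separately, the passage from ``limit set inside $\mathcal{E}(h)$'' to convergence to a point, which rests crucially on the finiteness established in Lemma \ref{lem:eqpoints} and on the connectedness of the limit set. If Corollary \ref{cor:conveqpoints} already delivers convergence to a point of $\mathcal{E}(h)$ directly, this last step is subsumed and only the hypothesis verification remains.
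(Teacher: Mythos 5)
Your proposal is correct and follows essentially the same route as the paper, which proves Lemma \ref{lem:conv} precisely by citing Corollary \ref{cor:conveqpoints} together with the isolation of equilibria from Lemma \ref{lem:eqpoints}; your hypothesis verification mirrors what the paper does inside Theorem \ref{th:1.2} and Lemma \ref{lem:Lya}. If anything you are slightly more careful than the paper on the noise term (bounding $\Ebold[\|\sigma_{n+1}^{-1}\Delta M_{n+1}\|_2^2\mid\mathcal{F}_n]=\OO(\sigma_{n+1}^{-1})$ explicitly), and your elementary vanishing-increments argument for connectedness of the limit set is subsumed by Theorem \ref{th:1.2}, exactly as you anticipate in your final remark.
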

The following proposition is actually a  stronger version of Proposition 4.1 of \cite{Si-2018}, where we show that for any $ 1< k \le d$,  $e=\frac{1}{k}(e_1+e_2+\ldots+e_k)$ is likely to be unstable. The case $d=2$ is covered in Proposition 4.1 of \cite{Si-2018}, so we extend this to $d \ge 3$.

\begin{prop}\label{Lemma: e_unstable_almost_surely}
Let $\left(\sigma_n \right)_n$ and $\left(\tau_n\right)_n$ be as in the assumptions of Theorem \ref{Thm: dominance}. For any $e=\frac{1}{k}(e_1+e_2+\ldots+e_k)$, where $1 < k \le d$, there exists $\delta_n>0$, such that
\begin{itemize}
\item[(i)]
\be
\displaystyle \lim_{n \to \infty} \delta_n =0,
\ee 
\item[(ii)] 
\begin{equation}\label{Eq: choice_of_delta}
\displaystyle\sum_{n \ge 1}\frac{\delta_n\sigma_{n+1}}{\tau_{n+1}}< \infty.
\end{equation}
\item[(iii)]
For any such sequence $\left(\delta_n\right)_{n \ge 0}$
\begin{equation}\label{Eq: e_unstable_almost_surely} 
\Pbold \left((\theta_{n,1}, \theta_{n,2}\ldots, \theta_{n,d-1},\theta_{n,d} )\in B_{d}^c (e, \delta_n) \text{ i.o.} \right)=1,
\end{equation}
where for $a \in \Rbold^{d}$ and $r>0$, $B_{d}(a,r):=\{x \in \Rbold^{d} : \|x -a\|_2 \le r\}$.  
\end{itemize}
\end{prop}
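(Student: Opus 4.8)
The plan is to reduce the $d$-dimensional escape statement to a one-dimensional problem for the difference of two coordinates in the support of $e$, and then to show that this scalar process cannot converge to $0$ faster than $\delta_n$. First I would fix two indices in the support of $e$, say $1$ and $2$, and set $u_n:=\theta_{n,1}-\theta_{n,2}$. Since $a^2+b^2\ge\frac12(a-b)^2$ for reals, one has $\|\theta_n-e\|_2\ge\frac{1}{\sqrt2}|u_n|$, so it suffices to prove $|u_n|>\sqrt2\,\delta_n$ infinitely often a.s. For the existence part (i)--(ii) I would take $\delta_n=\sigma_{n+1}/\tau_{n+1}$; then $\delta_n\to0$ because $\sum(\sigma_{n+1}/\tau_{n+1})^2<\infty$, and $\sum\delta_n\sigma_{n+1}/\tau_{n+1}=\sum(\sigma_{n+1}/\tau_{n+1})^2<\infty$, while the argument below works for any $(\delta_n)$ obeying (i)--(ii). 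By Lemma \ref{lem:conv}, $\theta_n\to\theta^*\in\mathcal{E}(h)$ a.s.; on $\{\theta^*\neq e\}$ we get $\|\theta_n-e\|_2\to\|\theta^*-e\|_2>0>\delta_n$ eventually, so the escape is automatic. Hence, using $\{\theta_n\to e\}\subseteq\bigcup_N\{\theta_n\in V_e\ \forall n\ge N\}$ for a neighbourhood $V_e$ of $e$ to be chosen, it remains to show $\mathbb{P}(A_N)=0$ for each $N$, where $A_N:=\{\theta_n\in V_e\ \forall n\ge N\}\cap\{|u_n|\le\sqrt2\,\delta_n\ \forall n\ge N\}$.

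Next I would extract the repulsive drift, where assumption \textbf{(C)} enters. Subtracting coordinates in \eqref{Eq:fundamental_recursion_mean_field} gives
\[
u_{n+1}=u_n+\frac{\sigma_{n+1}}{\tau_{n+1}}\big(h_1(\theta_n)-h_2(\theta_n)\big)+\frac{1}{\tau_{n+1}}\xi_{n+1},\qquad \xi_{n+1}:=\Delta M_{n+1,1}-\Delta M_{n+1,2},
\]
a martingale difference with $\mathbb{E}[\xi_{n+1}\mid\mathcal{F}_n]=0$. Writing $h_1(\theta)-h_2(\theta)=\beta(\theta)(\theta_1-\theta_2)$ with $\beta(\theta)=\frac{f(\theta_1)-f(\theta_2)}{(\theta_1-\theta_2)\|f_d(\theta)\|_1}-1$ and using the mean value theorem, $\beta(\theta)\to\frac{f'(1/k)}{k f(1/k)}-1$ as $\theta\to e$; by \eqref{Eq: Assumption_exponent} at $x=1/k$ this limit is $\ge\alpha-1>0$. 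Thus there exist $V_e$ and $\beta_0>0$ with $\beta\ge\beta_0$ on $V_e$, and $\beta$ is bounded by some $\beta_1$; this is the positivity of the relevant eigenvalue announced in the introduction. I would also record the noise non-degeneracy near $e$: each trial contributes $X^{(n+1)}_{i,1}-X^{(n+1)}_{i,2}\in\{-1,0,1\}$ with the probabilities of $\pm1$ tending to $1/k\in(0,1)$, so on $V_e$ one has $\mathbb{E}[\xi_{n+1}^2\mid\mathcal{F}_n]\ge c\,\sigma_{n+1}$ for some $c=c(k)>0$, and the conditional law of $\xi_{n+1}$ has its mode probability bounded away from $1$ uniformly on $V_e$.

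On $A_N$ the drift is absolutely summable, $\sum_{n\ge N}\big|\frac{\sigma_{n+1}}{\tau_{n+1}}\beta(\theta_n)u_n\big|\le\sqrt2\,\beta_1\sum_{n\ge N}\frac{\sigma_{n+1}}{\tau_{n+1}}\delta_n<\infty$ by condition (ii). The noise martingale $M_n:=\sum_{j<n}\tau_{j+1}^{-1}\xi_{j+1}$ has summable conditional quadratic variation, since $\mathbb{E}[\xi_{j+1}^2\mid\mathcal{F}_j]\le\sigma_{j+1}$ and $\sum_j\frac{\sigma_{j+1}}{\tau_{j+1}^2}\le\sum_j(\frac{\sigma_{j+1}}{\tau_{j+1}})^2<\infty$ (using $\sigma_{j+1}\ge1$ and Theorem \ref{Thm: dominance}(i)), so $M_n\to M_\infty$ a.s. Hence $u_n$ converges, and as $\delta_n\to0$ its limit is $0$ on $A_N$; telescoping then forces
\[
\sum_{j\ge N}\frac{1}{\tau_{j+1}}\xi_{j+1}=-u_N-\sum_{j\ge N}\frac{\sigma_{j+1}}{\tau_{j+1}}\beta(\theta_j)u_j.
\]

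The hard part is to show this equality has probability zero, i.e. that the accumulated noise a.s. avoids the (path-dependent) value on the right. The obstacle is genuine: the target is not $\mathcal{F}_N$-measurable, because the drift depends on the very increments that generate the noise. I would resolve it by a conditional anti-concentration argument: by the tower property and the uniform bound on the conditional mode probability of $\xi_{m+1}$ on $V_e$, for any fixed $\mathcal{F}_m$-measurable target the probability that the fresh increment realises the required value is at most some $\rho_0<1$, and iterating over infinitely many $m$ (a conditional Jessen--Wintner / Kolmogorov-type argument) forces the probability to $0$. Making this rigorous despite the path coupling — equivalently, invoking a stochastic-approximation non-convergence-to-unstable-points result in the form prepared in the appendix — is the delicate step, and it is exactly what extends Proposition 4.1 of \cite{Si-2018} from $d=2$, where $u_n$ is the only nontrivial direction, to the coordinate-difference direction used here for $d\ge3$. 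Summing $\mathbb{P}(A_N)=0$ over $N$ and combining with the trivial case $\{\theta^*\neq e\}$ yields \eqref{Eq: e_unstable_almost_surely}.
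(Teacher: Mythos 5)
Your reduction to the coordinate difference $u_n=\theta_{n,1}-\theta_{n,2}$ is sound and in one respect cleaner than the paper's own parametrization: the factorization $h_1(\theta)-h_2(\theta)=\beta(\theta)u_n$ is exact, the limiting rate $\beta(\theta)\to\alpha(1/k)-1\ge\alpha-1>0$ is uniformly positive for every $1<k\le d$, so you avoid both the Taylor remainder $Q_n$ of \eqref{def:Q_n} and the paper's case split between $k=d$ and $k<d$ (where the paper's factor $\frac{k-1}{k}\alpha(\tilde e_1)-1$ can be negative and creates its hardest case). The choice of $\delta_n$ for (i)--(ii), the reduction of \eqref{Eq: e_unstable_almost_surely} to $\mathbb{P}(A_N)=0$, and the martingale-convergence and drift-summability computations on $A_N$ are all correct.

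However, the final step is a genuine gap, and the mechanism you propose for it is fallacious rather than merely unfinished. On $A_N$ the telescoped identity constrains only the \emph{limit} of the series $\sum_{j\ge N}\tau_{j+1}^{-1}\xi_{j+1}$: conditionally on $\mathcal{F}_m$, the event $A_N$ does not force the fresh increment $\xi_{m+1}$ to realise any prescribed value, because the tail of the noise series and the adapted drift $\sum_{j>m}\frac{\sigma_{j+1}}{\tau_{j+1}}\beta(\theta_j)u_j$ --- built from the very same increments --- can compensate; hence per-step mode bounds $\rho_0<1$ cannot be multiplied, and a Jessen--Wintner purity argument does not apply since the target is measurable with respect to the same filtration as the noise rather than independent of it. A sharper symptom that the sketch cannot be completed as stated: your concluding argument uses only $|u_n|\le\sqrt2\,\delta_n$, the upper bound $\beta\le\beta_1$ and summability, never the repulsion $\beta\ge\beta_0>0$; yet (iii) is claimed for \emph{every} $(\delta_n)$ satisfying (i)--(ii), and for bounded $\sigma_n$ one may take $\delta_n=1/\log^2 n$, a tube so wide compared with the diffusive scale $\tau_n^{-1/2}$ that a process near a \emph{stable} equilibrium of an analogous scheme (e.g.\ concave $f$) would remain inside it with positive probability --- so any argument insensitive to the sign of $\beta$ must fail. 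What is needed, and what the paper's Step 3 actually supplies, is a quantitative anti-concentration bound for the entire remaining weighted noise sum against the shrinking window: conditional characteristic-function estimates show this sum is asymptotically Gaussian with conditional spread $\Gamma_{m,n}$, the expansion products $\prod_j\kappa_j$ (this is where positivity of the rate enters) yield $\delta_{n_m}/\Gamma_{m,n_m}\to0$ along $n_m=2m$, whence $\mathbb{P}(\mathcal{H}_m(e))\to0$ as in \eqref{Eq: dominance_by_Normal}; the regime $\sup_n\sigma_n<\infty$, $\alpha(\tilde e_1)\le k/(k-1)$ moreover requires the specific choice $\delta_n=O(n^{-1/2-\delta})$. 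Your difference-coordinate setup could plausibly simplify that analysis, since in your direction the product $\prod_j\bigl(1+\beta(\theta_j)\sigma_{j+1}/\tau_{j+1}\bigr)$ always diverges, but as written the proposal defers exactly the step that constitutes the proof.
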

\begin{rem}
By symmetry, it is clear from the above Proposition \ref{Lemma: e_unstable_almost_surely} that any $e\neq e_i$, for $1 \le i \le d$ is likely to be unstable as an equilibrium point and therefore, the only possibility is that $\theta_n \longrightarrow e_i$ almost surely as $n \to \infty$, for some $1 \le i \le d$.
\end{rem}

\begin{proof}
(i)+(ii): Recall that by our choice, we have chosen $(\sigma_n)_n$ and $(\tau_n)_{n}$, such that, $\sum_{n}\frac{\sigma_n}{\tau_n} =\infty$ and $\sum_{n}\frac{\sigma^2_n}{\tau^2_n}< \infty$, therefore there always exists $\delta_n$ that satisfies $\sum_{n \ge 1}\frac{\delta_n\sigma_{n+1}}{\tau_{n}+\frac{1}{2}\sigma_{n+1}}< \infty$, which implies that $\sum_{n \ge 1}\frac{\delta_n\sigma_{n+1}}{\tau_{n+1}}< \infty$ as $\tau_{n+1}=\tau_n+\sigma_{n+1}$. We can simply choose $0 \le \delta_n \le \frac{\sigma_{n+1}}{\tau_{n}+2 \sigma_{n+1}}$. It is immediately clear that $\lim_{n\to \infty}\delta_n=0$.\\
(iii):
For $e=\frac{1}{k}(e_1+e_2+\ldots+e_k)$, where $1 < k \le d$, define 
\[
\HH_{m}(e):=\{(\theta_{n,1}, \theta_{n,2}\ldots, \theta_{n,d-1}, \theta_{n,d} )\in B_{d}(e, \delta_n) \text{ for all } n \ge m\}.
\]

Since $\HH_{m}(e)$ is non-increasing, therefore it is enough to show that $$\lim_{m \to \infty}\Pbold(\HH_{m}(e))=0$$ to show \eqref{Eq: e_unstable_almost_surely}.

Recall that from \eqref{Eq:Fundamental_equation}, we have 
\ben
\theta_{n+1,1} = \frac{\tau_n \theta_{n,1} + B_{n+1,1}}{\tau_{n+1}},
\een
 where given $\mathcal{F}_n$, $B_{n+1,1}$ follows $Bin(\sigma_{n+1}, \Psi(\theta_{n,1}))$, $\Psi$ is short for
\begin{equation}\label{def:Psi}
\Psi(x_i) := \frac{f(x_i)}{\| f_d(x)\|_1},
\end{equation}
and $x=(x_1,\cdots, x_d) \in \mathbb{R}_+^d$, $f\in \mathcal{R}$.
 Re-write the above equation as 
\be\label{Eq: theta_exp_normal}
\theta_{n+1,1} = \frac{\tau_n \theta_{n,1} + \sigma_{n+1}\Psi(\theta_{n,1})+\eps_n \sqrt{\sigma_{n+1}\Psi(\theta_{n,1})(1-\Psi(\theta_{n,1}))}}{\tau_{n+1}},
\ee
where $\eps_n= \frac{B_{n+1,1}-\sigma_{n+1}\Psi(\theta_{n,1})}{\sqrt{\sigma_{n+1}\Psi(\theta_{n,1})(1-\Psi(\theta_{n,1}))}}.$ 
Let $\Psi_d:\mathbb{R}^d \rightarrow \mathbb{R}^d$ by $y\mapsto (\Psi(y_1),\cdots, \Psi(y_d))$.
By Taylor's theorem, we have for any $x \in \{y \in \Rbold^d_+ \setminus\{0\}: \sum_{i=1}^d y_i=1\}$
\be\label{Eq: Taylor_expansion_psi}
\Psi_d(x)- e=\sum_{i=1}^{d-1} \frac{\partial \Psi_d(\xi_x)}{\partial x_i}(x_i-\tilde{e}_i),
\ee where $\xi_x$ lies on the straight line joining $x$ and $e=(\tilde{e}_1, \tilde{e}_2\ldots, \tilde{e}_d) \in \mathcal{E}(h)$, since $\Psi(\theta_{n,1})=\tilde{e}_1$. 
Re-writing \eqref{Eq: theta_exp_normal}, using the Taylor expansion, we get 
\begin{align}
\nonumber \theta_{n+1,1}-\tilde{e}_1 & = & \frac{\tau_n}{\tau_{n+1}}\left(\theta_{n,1}-\tilde{e}_1 \right)+ \frac{\sigma_{n+1}}{\tau_{n+1}}\sum_{i=1}^{d-1} \frac{\partial \Psi_d(\xi_{\theta_n})}{\partial x_i}(\theta_{n,i}-\tilde{e}_i)\\
 & & \quad \quad + \frac{1}{\tau_{n+1}}\eps_n \sqrt{\sigma_{n+1}\Psi(\theta_{n,1})(1-\Psi(\theta_{n,1}))}.
\end{align}

\be\label{Eq: theta_Taylor_normal}
\theta_{n+1,1}-\tilde{e}_1=\k_n(\theta_n) \left(\theta_{n,1}-\tilde{e}_1 \right)+ Q_n(\theta_n)+\frac{1}{\tau_{n+1}}\eps_n \sqrt{\sigma_{n+1}\Psi(\theta_{n,1})(1-\Psi(\theta_{n,1}))}, 
\ee
where $\k_n(\theta_n)= \left(\frac{\tau_n}{\tau_{n+1}}+\frac{\sigma_{n+1}}{\tau_{n+1}}\frac{\partial \Psi_d(\xi_{\theta_n})}{\partial x_1}\right)$, and 
\begin{equation}\label{def:Q_n}
Q_n(\theta_n)= \frac{\sigma_{n+1}}{\tau_{n+1}}\sum_{i=2}^{d-1} \frac{\partial \Psi_d(\xi_{\theta_n})}{\partial x_i}(\theta_{n,i}-\tilde{e}_i).
\end{equation}

Re-iterating the above equation, we obtain
\begin{eqnarray}\label{Eq: theta_Taylor_normal_upto_m}
\nonumber \theta_{n,1}-\tilde{e}_1 & = &\prod_{j=m}^{n-1}\k_j(\theta_j) \left[\theta_{m,1}-\tilde{e}_1 +\sum_{j=m}^{n-1}\frac{1}{\left(\prod_{l=m}^{j}k_l(\theta_l)\right)} Q_j(\theta_j)\right] \\
\nonumber & & \quad +\prod_{j=m}^{n-1}\k_j(\theta_j) \sum_{j=m}^{n-1}\frac{1}{\left(\prod_{l=m}^{j}k_l(\theta_l)\right)}\eps_j \frac{\sqrt{\sigma_{j+1}\Psi(\theta_{j,1})(1-\Psi(\theta_{j,1}))}}{\tau_{j+1}}, 
\end{eqnarray}

By continuity of the partial derivatives of $\Psi_d$, for $x \in B_{d-1}(e, \delta_n)$ for all $n$ (large enough), we have $\frac{\partial \Psi_d(\xi_x)}{\partial x_i} = \frac{\partial \Psi_d(e)}{\partial x_i}+\OO(\delta_n)$.  

\noindent\textbf{Step I}:  In this step we analyse the asymptotic behavior of $\prod_{j=1}^n k_l(\theta_j)$. 
Observe that at an equilibrium point $e=\left(\tilde{e}_1, \ldots, \tilde{e}_d\right)$
\[ \frac{\partial \Psi_d(e)}{\partial x_1}= \frac{f'(\tilde{e}_1)}{f(\tilde{e}_1)+\ldots+f(\tilde{e}_d)}-\frac{f(\tilde{e}_1)\left(f'(\tilde{e}_1)-f'(1-\tilde{e}_1-\tilde{e}_2-\ldots-\tilde{e}_{d-1})\right)}{\left(f(\tilde{e}_1)+\ldots+f(\tilde{e}_d)\right)^2}.
\]
Therefore,  
\be\label{Eq: partial_derivative_first_coordinate} 
\frac{\partial \Psi_d(e)}{\partial x_1}=
\begin{cases} \alpha (\tilde{e}_1) , &  \text{ if } \tilde{e}_1 = \tilde{e}_d, \\               
               \a(\tilde{e}_1)-\frac{f(\tilde{e}_1)\left(f'(\tilde{e}_1)-f'(0)\right)}{\left(f(\tilde{e}_1)+\ldots+f(\tilde{e}_d)\right)^2}  , & \text{ if } \tilde{e}_d =0. 
\end{cases} 
\ee
From Assumption {\bf(C)}, \eqref{Eq: deducing_equilibrium_points} and the observation that $f'(0) \ge 0$, we have 
\be\label{Eq: partial_derivative_first_coordinate_bound} 
\frac{\partial \Psi_d(e)}{\partial x_1}
\begin{cases} = \alpha (\tilde{e}_1) >1 , &  \text{ if } \tilde{e}_1 = \tilde{e}_d, \\               
               \ge \left(1-\tilde{e}_1\right)\a(\tilde{e}_1), & \text{ if } \tilde{e}_d =0,
\end{cases} 
\ee
where $\alpha(x) := \frac{x f'(x)}{f(x)}$ and $f\in \mathcal{R}$.
Therefore, on $\HH_m$ we obtain for $e=\frac{1}{k}(e_1+e_2+\ldots + e_k)$
\[
\label{Eq: kappa_replaced_derivatives}
\nonumber \k_n(\theta_n)=\begin{cases}
\frac{\tau_n}{\tau_{n+1}}+\frac{\sigma_{n+1}}{\tau_{n+1}}\left(\frac{k-1}{k}\a(\tilde{e}_1) +\OO(\delta_{n})\right),  &  \text{ if } 1< k \le (d-1), \\
\frac{\tau_n}{\tau_{n+1}}+\frac{\sigma_{n+1}}{\tau_{n+1}}\left(\a(\tilde{e}_1) +\OO(\delta_{n})\right),  &  \text{ if } k=d.
\end{cases}
\]
Writing $\pi_{m,n-1}(k):= \prod_{j=m}^{n-1} \left(\frac{\tau_j+ \beta_k \sigma_{j+1}}{\tau_{j+1}}\right)$
\ben
\prod_{j=m}^{n-1}\k_j(\theta_j)=\pi_{m,n-1}(k)\prod_{j=m}^{n-1}\left(1+\frac{\OO(\delta_{j})\sigma_{j+1}}{\tau_j+\beta_k \sigma_{j+1}}\right),
\een where $\beta_k = \frac{k-1}{k}\a(\tilde{e}_1)$, if $1 < k \le d-1$, and $\beta_k = \a(\tilde{e}_1)$ for $k=d$. 
\[
\prod_{j=m}^{n-1}\left(1+\frac{\OO(\delta_{j})\sigma_{j+1}}{\tau_j+\beta_k \sigma_{j+1}}\right)=\exp\left(\sum_{j=m}^{n-1}\frac{\OO(\delta_{j})\sigma_{j+1}}{\tau_j+\beta_k \sigma_{j+1}}\right).
\]
Since $1 \ge (1-\frac{1}{k}) \ge \frac{1}{2}$ and $\a(\tilde{e}_1) >1$,  
by our choice of $\delta_j$, in \eqref{Eq: choice_of_delta}, $\sum_{j}\frac{\delta_j}{\tau_j+\beta_k \sigma_{j+1}}< \infty$. 
Hence, for all $m$ large enough 
\[
\prod_{j=m}^{n-1}\left(1+\frac{\OO(\delta_{j})\sigma_{j+1}}{\tau_j+\beta_k \sigma_{j+1}}\right)=(1+o(1)).
\]
Therefore, for all $m$ large enough and on $\HH_m$, we have  
\be\label{Eq: kappa_small_oh}
\prod_{j=m}^{n-1}\k_j(\theta_j)=\pi_{m,n-1}(k)(1+o(1)).
\ee 
\noindent\textbf{Step II}: In this step we analyse the sum over $Q_j(\theta_j)$, where $Q_j$ was defined in \eqref{def:Q_n}.

From our Assumption {\bf(B)}, for $2 \le i \le k$, we have $\frac{\partial \Psi_d(\xi_x)}{\partial x_i} = \frac{\partial \Psi_d(e)}{\partial x_i}+\OO(\delta_n)$ is bounded for all $n \ge m$.

On $\HH_m$, 
\be\label{Eq: simplified_Qj}
Q_j(\theta_j) =\OO(\delta_j)\frac{\sigma_{j+1}}{\tau_{j+1}}.
\ee

Recall that in {\bf Step I} we proved that $\prod_{j=m}^{l}\k_j(\theta_j)=\pi_{m,l}(k)(1+o(1))$, where 
$\pi_{m,l}(k)=\prod_{j=m}^l \left(\frac{\tau_j+ \beta_k \sigma_{j+1}}{\tau_{j+1}}\right)$.

\noindent \textbf{Case i}: When $k=d$, 
$$\pi_{m,l}(k)=\exp \left(\sum_{j=m+1}^l \log\left(1+(\a(\tilde{e}_1)-1)\frac{\sigma_{j+1}}{\tau_{j+1}}\right)\right) \longrightarrow \infty,$$ as $l\rightarrow \infty$ since $\a(\tilde{e}_1) >1$ and $\sum_{j} \frac{\sigma_j}{\tau_j} = \infty$.
Therefore, 
\be \label{Eq: sum_over_Q_k_d}
\nonumber \sum_{j=m}^{n-1}\frac{1}{\left(\prod_{l=m}^{j}k_l(\theta_l)\right)} Q_j(\theta_j)= (1+o(1))\sum_{j=m}^{n-1}
\exp \left(- \sum_{j=m+1}^l (\a(\tilde{e}_1)-1)\frac{\sigma_{j+1}}{\tau_{j+1}}\right) \frac{\sigma_{j+1}}{\tau_{j+1}}\OO(\delta_j) < \infty,
\ee by our choice of $\delta_j$.

\noindent \textbf{Case ii}: When $1 <k \le d-1$. 
$\pi_{m,l}(k)=\exp \left(\sum_{j=m+1}^l \log\left(1+(\frac{k-1}{k}\a(\tilde{e}_1)-1)\frac{\sigma_{j+1}}{\tau_{j+1}}\right)\right)=\exp \left(\sum_{j=m+1}^l (\frac{k-1}{k}\a(\tilde{e}_1)-1)\frac{\sigma_{j+1}}{\tau_{j+1}}\right) + o(1)$ by the assumptions of Theorem \ref{Thm: dominance}. 

If $\a(\tilde{e}_1)\ge  \frac{k}{k-1}$, then it follows similar to Case i, that 
\be \label{Eq: sum_over_Q_k_alpha_greater_2}
\begin{split}
& \nonumber \sum_{j=m}^{n-1}\frac{1}{\left(\prod_{l=m}^{j}k_l(\theta_l)\right)} Q_j(\theta_j)\\
& = (1+o(1))\sum_{j=m}^{n-1}
\exp \left(- \sum_{j=m+1}^l \left(\frac{k-1}{k}\a(\tilde{e}_1)-1\right)\frac{\sigma_{j+1}}{\tau_{j+1}}\right) \frac{\sigma_{j+1}}{\tau_{j+1}}\OO(\delta_j) < \infty.
\end{split}
\ee
If $ \a(\tilde{e}_1) < \frac{k}{k-1}$, then because of our choice of $\delta_j$ for all $m,n$ (large enough)
\begin{equation}\label{Eq: sum_over_Q_k_alpha_lessthan_2}
\begin{split}
&  \left(\prod_{j=m}^{n-1}\k_j(\theta_j)\right)\sum_{j=m}^{n-1}\frac{1}{\left(\prod_{l=m}^{j}k_l(\theta_l)\right)} Q_j(\theta_j)  \\
= & \sum_{j=m}^{n-1} \left(\prod_{l=j+1}^{n-1}\k_l(\theta_l)\right) Q_j(\theta_j)\\
 & =  (1+o(1)) \sum_{j=m}^{n-1} \exp \left(\sum_{l=j+1}^{n-1} \left(\frac{k-1}{k}\a(\tilde{e}_1) -1\right)\frac{\sigma_{l+1}}{\tau_{l+1}}\right)\OO(\delta_{j})\frac{\sigma_{j+1}}{\tau_{j+1}} < \epsilon ,
\end{split}
\end{equation}
 since $\frac{k-1}{k} \a(\tilde{e}_1) <1$, $\sum_j \frac{\sigma_{j+1}}{\tau_{j+1}} = \infty$ and $\sum_{j} \delta_j \frac{\sigma_{j+1}}{\tau_{j+1}}< \infty$. 

\noindent{{\bf Step 3}}: In this step we will explore the last part of the summand in \\
$\sum_{j=m}^{n-1}\frac{1}{\left(\prod_{l=m}^{j}k_l(\theta_l)\right)}\eps_j \frac{\sqrt{\sigma_{j+1}\Psi(\theta_{j,1})(1-\Psi(\theta_{j,1}))}}{\tau_{j+1}}$. 
The argument is similar to that of Proposition 4.1 in \cite{Si-2018}, so we present only those details that are crucial and slightly different from those in \cite{Si-2018}. From Step 1, and $\Psi(\theta_{j,1})= \frac{1}{k}+\OO(\delta_j)$, we get for appropriate constants $c_k$
\[
\frac{1}{\left(\prod_{l=m}^{j}k_l(\theta_l)\right)}\eps_j \frac{\sqrt{\sigma_{j+1}\Psi(\theta_{j,1})(1-\Psi(\theta_{j,1}))}}{\tau_{j+1}}=(c_k+o(1))\eps_j \frac{\sqrt{\sigma_{j+1}}}{\pi_{m,j}\tau_{j+1}}.
\]

We know that 
if $X \sim Bin (n,p)$, then

$\Ebold \left[\exp \left( it \frac{X-np}{\sqrt{np(1-p)}}\right)\right]= \left(1-\frac{t^2}{2n}+\OO\left(\frac{t^3}{n^{\frac{3}{2}}}\right)\right)^n = \exp\left(-\frac{t^2}{2}+ \OO \left(\frac{t^3}{\sqrt{n}} \right)\right)$, when $|t| < \delta$ for some $\delta>0$. Therefore, 
\ben
\Ebold \left[\exp \left(it \eps_j \right)\mid \mathcal{F}_{j-1}\right]= \exp\left(-\frac{t^2}{2}+ \OO \left(\frac{t^3}{\sqrt{\sigma_{j+1}}} \right)\right),
\een 
since given $\mathcal{F}_{j-1}$, $\eps_j \sim Bin (\sigma_j, \Psi(\theta_{j-1,1}))$.
And hence, we have by tower-property
\ben 
\Ebold \left[\exp \left(it \sum_{j=m}^{n-1}\eps_j \right)\biggl | \mathcal{F}_{m-1}\right]= \exp\left(\sum_{j=m}^{n-1}\frac{-t^2}{2}+ \OO \left(\frac{t^3}{\sqrt{\sigma_{j+1}}} \right)\right). 
\een
Hence,  we can write 
\be \label{Eq: Characteristic_function_conditional}
\Ebold \left[\exp \left(\frac{it}{\mu_{m,n}} \sum_{j=m}^{n-1}\eps_j \right) \biggl | \mathcal{F}_{m-1}\right]=\exp\left(-\frac{t^2}{2}+ \OO \left(\frac{t^3}{\mu^3_{m,n}} \right)\sum_{j=m}^{n-1}\frac{\sigma_{j+1}}{\pi_{m,j}^3 \tau_{j+1}^3}\right),
\ee
where $\mu_{m,n}:= \sum_{j=m}^{n-1}\frac{\sigma_{j+1}}{\pi_{m,j}^2 \tau_{j+1}^2}$.

 For $e =\frac{1}{d}(e_1+e_2+\ldots+e_d)$, it follows that $\frac{1}{\mu^3_{m,n}}\sum_{j=m}^{n-1}\frac{\sigma_{j+1}}{\pi_{m,j}^3 \tau_{j+1}^3} \longrightarrow 0$ as $m,n \to \infty$. 

When $e =\frac{1}{k}(e_1+e_2+\ldots+e_k)$ for $1 <k \le d-1$, and $\sup_{n} \sigma_n =\infty$, exactly same argument as in \cite{Si-2018} gives us $\frac{1}{\mu^3_{m,n}}\sum_{j=m}^{n-1}\frac{\sigma_{j+1}}{\pi_{m,j}^3 \tau_{j+1}^3} \longrightarrow 0$ as $m,n \to \infty$. 

For $e=\frac{1}{k}(e_1+e_2+\ldots+e_k)$ for $1 <k \le d-1$, and $\sup_{n} \sigma_n < \infty$, we observe that if $\a(\tilde{e}_1)> \frac{k}{k-1}$ then similar argument as in \cite{Si-2018} works. 

Therefore, the only case we need to discuss in details is for $e=\frac{1}{k}(e_1+e_2+\ldots+e_k)$ for $1 <k \le d-1$, and $\sup_{n} \sigma_n < \infty$, and $\a \le \frac{k}{k-1}$.

It is easy to see that 

\begin{eqnarray}\label{Eq: kappa_alpha_le_k-1/k}
 \prod_{j=m}^{n-1}\k_j(\theta_j)= \exp \left(- \sum_{j=m+1}^{n-1} \left(\frac{k-1}{k}\a-1+\OO(\delta_j) \right)\frac{\sigma_{j+1}}{\tau_{j+1}}\right)\longrightarrow 0, \text{ as } m,n \to \infty, 
\end{eqnarray}
since $\lim_{n \to \infty} \delta_n =0$, and $\sum_{j}\frac{\sigma_j}{\tau_j} =\infty$.

\begin{eqnarray} \label{Eq: Characteristic_function_conditional_alpha_le_k-1/k}
\nonumber & & \Ebold \left[\exp \left( \frac{it}{\Gamma_{m ,n-1}} \sum_{j=m}^{n-1}\left(\prod_{l=j+1}^{n-1}k_l(\theta_l)\right)\eps_j \frac{\sqrt{\sigma_{j+1}}}{\tau_{j+1}}\right)\biggl | \mathcal{F}_{m-1}\right]\\
& = & \exp\left(-\frac{t^2}{2} + \OO(1)\frac{t^3}{\Gamma^3_{m ,n-1}}\sum_{j=m}^{n-1}\left(\prod_{l=j+1}^{n}\k_l(\theta_l)\right)^3\frac{\sigma_{j+1}}{\tau_{j+1}^3}\right),
\end{eqnarray}
where $\Gamma_{m ,n-1}:=\left[\sum_{j=m}^{n-1}\left(\prod_{l=j+1}^{n-1}\k_l(\theta_l)\right)^2\frac{\sigma_{j
+1}}{\tau^2_{j+1}}\right]^{\frac{1}{2}}$.  

Since $\sup_n \sigma_n <\infty$, $ j \le \tau_j \le C j $ for suitable constant $C>0$. From \eqref{Eq: kappa_alpha_le_k-1/k}, we get 
 
\be
\sum_{j=m}^{n-1}\left(\prod_{l=j+1}^{n-1}\k_l(\theta_l)\right)^2 \frac{\sigma_{j
+1}}{\tau^2_{j+1}} = \OO(1) \sum_{j=m}^{n-1} \frac{1}{j^2} = \OO(1) \left(\frac{1}{m}-\frac{1}{n}\right)
\ee 

By similar arguments, 
\be
\sum_{j=m}^{n-1}\left(\prod_{l=j+1}^{n-1}\k_l(\theta_l)\right)^3 \frac{\sigma_{j
+1}}{\tau^3_{j+1}} = \OO(1) \sum_{j=m}^{n-1} \frac{1}{j^3}= \OO(1) \left(\frac{1}{m^2}-\frac{1}{n^2}\right)
\ee

Therefore, as $m \to \infty$,
\be
\frac{1}{\Gamma^3_{m ,n-1}}\sum_{j=m}^{n-1}\left(\prod_{l=j+1}^{n}\k_l(\theta_l)\right)^3\frac{\sigma_{j+1}}{\tau_{j+1}^3}= \OO(1)\frac{1}{\sqrt{m}}\longrightarrow 0.
\ee

For this case, choose the subsequence $n_m=2m$ and choose $\delta_n= \OO(1) \frac{1}{n^{\frac{1}{2}+\delta}}$, for some $\delta>0$. Finally from all previous observations,
\be \label{Eq: dominance_by_Normal}
\Pbold(\HH_{m}(e)) \le \Pbold \left(\Gamma_{m ,n_m} N \in \left[-\delta_{n_m}, \delta_{n_m}\right]\right)\longrightarrow 0 \text{ as } m \to \infty,
\ee
since $\frac{\delta_{n_m}}{\Gamma_{m ,n_m}}=\OO(1) \frac{1}{m^{\delta}} \longrightarrow 0$ as $m \to \infty$. 
\end{proof}

\begin{prop}\label{lem:contrivial}
Let $(\theta_n)_{n\geq 0}$ be the process defined recursively in \eqref{Eq:fundamental_recursion_mean_field}. Then
\[
\theta_n \overset{a.s.}\longrightarrow e^*,
\]
as $n\rightarrow \infty$ where $e^* \in \{e_1,...,e_d \}$ are the trivial equilibrium points.
\end{prop}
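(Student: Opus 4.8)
The plan is to combine Lemma~\ref{lem:conv} with Proposition~\ref{Lemma: e_unstable_almost_surely} to eliminate every non-trivial equilibrium as a possible limit. By Lemma~\ref{lem:conv} we already know that $\theta_n \to \theta^*$ almost surely for some (random) $\theta^* \in \mathcal{E}(h)$, so it suffices to show $\Pbold(\theta^* \notin \{e_1,\dots,e_d\})=0$. By Lemma~\ref{lem:eqpoints} the set $\mathcal{E}(h)$ is finite: apart from the coordinate vectors $e_1,\dots,e_d$, every equilibrium is determined by its support $I^c$ of size $k:=d-|I|\in\{2,\dots,d\}$, on which all coordinates equal $1/k$, and is thus a coordinate-permutation of $e=\tfrac1k(e_1+\dots+e_k)$. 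Since the argument of Proposition~\ref{Lemma: e_unstable_almost_surely} is insensitive to which $k$ coordinates carry the support, it applies verbatim to every non-trivial equilibrium, so it is enough to treat this representative and invoke symmetry, as in the Remark following Proposition~\ref{Lemma: e_unstable_almost_surely}.

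First I would fix such an $e$ and let $(\delta_n)_{n\ge 0}$ be the sequence furnished by Proposition~\ref{Lemma: e_unstable_almost_surely}, so that $\delta_n\to 0$ and, with $\mathcal{H}_m(e)=\{\theta_n\in B_d(e,\delta_n)\ \forall n\ge m\}$, the proposition yields $\lim_m\Pbold(\mathcal{H}_m(e))=0$ and hence $\Pbold\big(\bigcup_m\mathcal{H}_m(e)\big)=0$. I would then argue that convergence to $e$ forces the path eventually into these shrinking balls, i.e.\ $\{\theta_n\to e\}\subseteq\bigcup_m\mathcal{H}_m(e)$, whence $\Pbold(\theta_n\to e)=0$. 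The justification rests on the fluctuation analysis behind Proposition~\ref{Lemma: e_unstable_almost_surely}: in the linearised recursion \eqref{Eq: theta_Taylor_normal}, depending on whether $\beta_k>1$ or $\beta_k<1$ the amplification factor $\prod_{j=m}^{n-1}\k_j(\theta_j)$ either diverges or decays, but in both regimes a path with $\theta_n\to e$ must have its deviation $\theta_{n,1}-\tilde{e}_1$ asymptotically controlled by the accumulated noise at scale $\Gamma_{m,n}$; the calibration $\delta_n=\OO(n^{-1/2-\delta})$ together with $\delta_{n_m}/\Gamma_{m,n_m}\to 0$ then drives the Gaussian bound $\Pbold(\mathcal{H}_m(e))\le\Pbold\big(\Gamma_{m,n_m}N\in[-\delta_{n_m},\delta_{n_m}]\big)\to 0$.

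It then remains to take a union over the finitely many non-trivial equilibria and their coordinate-permutations to obtain $\Pbold(\theta^*\notin\{e_1,\dots,e_d\})=0$. Combined with Lemma~\ref{lem:conv}, this gives $\theta_n\to e^*$ almost surely for some $e^*\in\{e_1,\dots,e_d\}$, as claimed.

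The step I expect to be the main obstacle is precisely the inclusion $\{\theta_n\to e\}\subseteq\bigcup_m\mathcal{H}_m(e)$: since $\delta_n\to 0$, ordinary convergence to $e$ guarantees only that $\theta_n$ eventually enters every \emph{fixed} neighbourhood, not that it enters the \emph{shrinking} balls $B_d(e,\delta_n)$, so the inclusion does not follow from convergence alone. One must extract from the instability structure that any path converging to $e$ does so at the noise-dictated rate, which $\delta_n$ is chosen to beat. The cleanest remedy is to upgrade Proposition~\ref{Lemma: e_unstable_almost_surely} directly to $\Pbold(\theta_n\to e)=0$, by observing that $\prod_{j=m}^{n-1}\k_j(\theta_j)$ forces the bracketed initial-plus-remainder term in \eqref{Eq: theta_Taylor_normal_upto_m} to vanish in the limit along any convergent path, an event of probability zero by the atomlessness of the limiting noise.
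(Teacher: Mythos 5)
Your reduction -- Lemma~\ref{lem:conv} plus permutation symmetry, so that it suffices to rule out $\theta_n \to e$ for $e=\frac1k(e_1+\cdots+e_k)$, $2\le k\le d$ -- is exactly the paper's starting point, and you have correctly located the crux: the inclusion $\{\theta_n\to e\}\subseteq\bigcup_m\mathcal{H}_m(e)$ is false, because $\delta_n\to 0$ means convergence to $e$ only puts the path eventually in every \emph{fixed} ball, not in the shrinking ones. So Proposition~\ref{Lemma: e_unstable_almost_surely} alone does not kill convergence to $e$, and your proposal stops precisely where the real work begins. The repair you sketch does not close the gap: the linearisation \eqref{Eq: theta_Taylor_normal} and its iterate, with the $\mathcal{O}(\delta_j)$ control of the partial derivatives and the asymptotics of $\prod_j\k_j(\theta_j)$, are only valid \emph{on} $\mathcal{H}_m(e)$, i.e.\ while the trajectory stays inside the shrinking balls, so you cannot ``observe'' anything from them along an arbitrary convergent path; and ``atomlessness of the limiting noise'' is never established anywhere -- the Gaussian control in Step 3 of the paper concerns sums over windows $[m,n_m]$ normalised by $\Gamma_{m,n_m}$, conditionally on $\mathcal{F}_{m-1}$, not the law of an almost-sure limit random variable. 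As written, your last paragraph is an assertion of the conclusion, not an argument.

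The paper bridges this gap with a separate martingale-trapping argument, which is the actual content of its proof of Proposition~\ref{lem:contrivial}. One writes $\theta_{n+\eta,1}=\theta_{\eta,1}+M_{n,1}(\eta)-R_{n,1}(\eta)$ as in \eqref{eq:rec}, where $M_{n}(\eta)$ is a martingale with $\Ebold\left[\langle M_n,M_n\rangle\right]\le d/\tau_\eta$ and $R_{n,1}$ accumulates the drift $\frac{\sigma_j}{\tau_j}\bigl(\theta_{j-1,1}-\Psi(\theta_{j-1,1})\bigr)$. Choosing $\delta_n$ to satisfy, in addition to (i)--(ii) of Proposition~\ref{Lemma: e_unstable_almost_surely}, also $1/(\delta_n^2\tau_n)\to 0$, the bound \eqref{Eq: prob_conv_martingale_zero} makes the martingale tail smaller than $\delta_\eta/2$ with high probability, while Proposition~\ref{Lemma: e_unstable_almost_surely} supplies, with positive probability, a finite time at which $\theta_{n,1}<\frac1k-\delta_n$. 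On the intersection $E_\eta$ of these two events, the fixed sign of $x-\Psi(x)$ on $(\frac{1}{k+1},\frac1k)$ (equilibria are isolated) gives by induction that $\theta_{n,1}<\frac1k-\frac{\delta_\eta}{2}$ for \emph{all} $n\ge\eta$: the path is trapped strictly below $\frac1k$, so $R_{n,1}$ stays positive, whereas $\theta_n\to e$ would force $\lim_n R_{n,1}=\theta_{\eta,1}+M_{\infty,1}-\frac1k\le-\frac{\delta_\eta}{2}$, the contradiction \eqref{Eq: R_n_negative}. Note that this one-sided trap uses the escape below the shrinking threshold only \emph{once} (to start the induction), which is all that Proposition~\ref{Lemma: e_unstable_almost_surely} can deliver; no rate of convergence or atomlessness is needed. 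That quantitative trapping step is the missing ingredient in your proposal.
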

The proof of this proposition is similar to parts of Theorem 1.2. from \cite{Si-2018} adapted appropriately for the class of reinforcement functions $\mathcal{R}$, the main difference is in the final argument leading upto the conclusion. We present all details for the sake of completeness. 

\begin{proof}
For each $n\in \mathbb{N}$ and $i \in \{1,..., d\}$ we have that

\begin{equation}
\theta_{n+1, i}  = \frac{\tau_n \theta_{n,i} + B_{n+1, i}}{\tau_{n+1}}\\
\end{equation}
where $B_{n,i} \sim Bin \left(\sigma_n, \Psi(\theta_{n, i}) \right)$ given $\mathcal{F}_n$, where $\Psi$ was defined in \eqref{def:Psi}. We can express the system of equations for $i\in \{1,...,d\}$ by 
\begin{equation}
\begin{split}
\theta_{n+1, i} &= \theta_{n,i} + \frac{\sigma_{n+1}}{\tau_{n+1}} \left ((\Psi(\theta_{n,i})- \theta_{n,i}) +\frac{1}{\sigma_{n+1}} (B_{n+1} -\sigma_{n+1}\Psi(\theta_{n,i}) )  \right )\\
& = \theta_{n,i} + \frac{B_{n+1, i} -\sigma_{n+1} \Psi(\theta_{n,i})}{\tau_{n+1}} - \frac{\sigma_{n+1}}{\tau_{n+1}} (\theta_{n,i} - \Psi(\theta_{n,i})).
\end{split}
\end{equation}
For every $\eta \geq 0,$ we have by the above recursive relations that

\begin{equation}\label{eq:rec}
\theta_{n+\eta, i}  = \theta_{\eta, i} + M_{n, i}(\eta) - R_{n, i}(\eta) \\
\end{equation}
where 
\[
M_{n,i}(\eta) = \sum_{j= \eta+1}^{n+\eta} \frac{B_{j, i} -\sigma_{j} \Psi(\theta_{j-1, i})}{\tau_{j}}
\]
for $i=1,...,d$ and
\[
R_{n,i}(\eta) = \sum_{j=\eta+1}^{\eta+n} \frac{\sigma_j}{\tau_j} (\theta_{j-1, i} - \Psi(\theta_{j-1, i})).
\]
Then the vector $M_n(\eta) = (M_{n,1}(\eta),...,M_{n,d}(\eta))$ is a martingale w.r.t. $(\mathcal{F}_{n+ \eta })_n$, since $\Ebold\left[B_{n+1, i}\mid \mathcal{F}_n\right]=\sigma_{n+1} \Psi(\theta_{n, i})$. Note that up to now, we did not use the specific definition of $\eta$, which we will define later. For simplicity of notation, we write $M_{n,i}$ instead of $M_{n,i}(\eta)$.
Let us show that the martingale is bounded in $\L^2(\mathbb{R}^d)$. Note that we can write
\[
M_{n,i} = M_{n-1,i} +  \frac{B_{n+\eta,i} -\sigma_{n+\eta} \Psi(\theta_{n+\eta-1, i})}{\tau_{n+\eta}}
\]
and further estimate
\begin{equation}\label{Eq:L_2_boundedness_martingale_shown}
\begin{split}
\mathbb{E} (\langle M_{n}, M_{n} \rangle | \mathcal{F}_{\eta}) & = \sum_{i=1}^d  \mathbb{E} ( (M_{n,i})^2 | \mathcal{F}_{\eta}) \\
& =  \sum_{i=1}^d  \mathbb{E} ( (M_{n-1,i})^2 | \mathcal{F}_{\eta}) +    \mathbb{E} \left ( \left(  \frac{B_{n+\eta,i} -\sigma_{n+\eta} \Psi(\theta_{n+\eta-1, i})}{\tau_{n+\eta}}  \right)^2 \biggl| \mathcal{F}_{\eta} \right) \\
& + 2\sum_{i=1}^d \mathbb{E} \left ( \left [ \frac{B_{n+\eta, i} -\sigma_{n+\eta} \Psi(\theta_{n+\eta-1, i})}{\tau_{n+\eta}} \right ] M_{n-1} \biggl| \mathcal{F}_{\eta} \right)  \\
& = \sum_{i=1}^d  \mathbb{E} ( (M_{n-1,i})^2 | \mathcal{F}_{\eta}) +    \frac{1}{\tau^2_{n+\eta}}\mathbb{E} \left (  \sigma_{n+\eta} \Psi(\theta_{n+\eta-1, i} )(1-\Psi(\theta_{n+\eta-1, i}))  \biggl| \mathcal{F}_{\eta} \right) 
\end{split}
\end{equation}

Since $\sigma_{n+\eta} \Psi(\theta_{n+\eta-1, i} )(1-\Psi(\theta_{n+\eta-1, i}) \leq \sigma_{n+\eta}$ and by recursivity we can estimate
\[
\mathbb{E} (\langle M_{n}, M_{n} \rangle | \mathcal{F}_{\eta}) \leq d \sum_{j=\eta+1}^{\eta+n} \frac{\sigma_j}{\tau^2_j}.
\]

\begin{equation} \label{Eq: sum_over_inverse_tau_to_integral}
\begin{split}
\sum_{j=\eta+1}^{\eta+n} \frac{\sigma_j}{\tau^2_j} & = \sum_{j=\eta+1}^{\eta+n} \int_{\tau_{j-1}}^{\tau_j}{\frac{1}{\tau^2_j} \mathrm{d}x} \le \sum_{j=\eta+1}^{\infty} \int_{\tau_{j-1}}^{\tau_j}{\frac{1}{x^2} \mathrm{d}x} = \int_{\tau_{\eta}}^{\infty} {\frac{1}{x^2} \mathrm{d}x} = \frac{1}{\tau_{\eta}},
\end{split}
\end{equation}

so that $M_n$ is bounded in $L^2(\mathbb{R}^d)$:
\be\label{Eq: L2_bounded_martingale}
\mathbb{E} (\mathbb{E} (\langle M_{n}, M_{n} \rangle | \mathcal{F}_{\eta}) )= \mathbb{E} (\langle M_{n}, M_{n} \rangle)  \leq \frac{d}{\tau_{\eta}}
\ee
which implies that $M_n$ converges almost surely and in $\L^2(\mathbb{R}^d)$  to some $M_{\infty} \in \L^2(\mathbb{R}^d)$. 

Observe that there exists a sequence $\left(\delta_n\right)_{n \ge 0}$ (see Lemma 5.1 from \cite{Si-2018}), such that, it satisfies (i) and (ii) from Proposition \ref{Lemma: e_unstable_almost_surely}, and 
\ben
\lim_{n \to \infty} \frac{1}{\delta_n^2 \tau_n}=0.
\een

For any $\eps>0$, choose $N_1>0$, such that, for all $n \ge N_1$
\be\label{Eq: prob_conv_martingale_zero}
\Pbold\left(M_{n,1}(N_1)\ge \frac{\delta_{N_1}}{2}\right) \le \frac{4 d}{\delta_{N_1}^2 \tau_{N_1} }\le \eps.
\ee

By Proposition \ref{Lemma: e_unstable_almost_surely}, for $\delta_n $ as in \eqref{Eq: prob_conv_martingale_zero} W.l.o.g., we may assume that $\Pbold (N_2<\infty)>0$, where $N_2:= \inf \left\{n: \theta_{n,1}<\frac{1}{k}- \delta_n\right\}$. Choose $\eta = \max \{N_1,N_2\}$. Henceforth, we will work with $M_{n,1}(\eta)$ and $R_{n,1}(\eta)$ for this choice of $\eta$. As before, we write $M_{n,1}$ and $R_{n,1}$ for notational simplicity. 

Lemma \ref{lem:conv} yields that $\theta_n \rightarrow \theta^*$ as $n\rightarrow \infty$, hence together with $M_n \rightarrow M_{\infty}$ a.s.,  
we deduce that $R_n$ has to converge as well. It remains to show that $\theta^* \neq e$ where $e=\frac{1}{k}(e_1+...+e_k)$ and $2 \leq k \leq d$. 
Assume that $\theta^*=e$. Recall that $\Psi(\theta^*_{i})=\frac{1}{k}$ for $1 \le i \le k$, and $\Psi(\theta^*_{i})=0$ for all other $i$. Since $\Psi\left(\cdot\right)$ is continuous, and $\mathcal{E}(h)$ consists of isolated points, either $\Psi\left(x\right)<x$ for all $x \in (\frac{1}{k+1}, \frac{1}{k})$ and $x<\Psi\left(x\right)$ for all $x \in (\frac{1}{k}, \frac{1}{k-1})$ or $\Psi\left(x\right)>x$ for all $x \in (\frac{1}{k+1}, \frac{1}{k})$ and $x>\Psi\left(x\right)$ for all $x \in (\frac{1}{k}, \frac{1}{k-1})$. W.l.o.g., we may assume that $\Psi\left(x\right)<x$ for all $x \in (\frac{1}{k+1}, \frac{1}{k})$. 

On the event $E_{\eta}:=\left\{\theta_{\eta,1}<\frac{1}{k}-\delta_{\eta}\right\} \cap \left\{M_{n,1}(\eta)\le \frac{\delta_{\eta}}{2}\right\}$, we have $\theta_{\eta, 1} - \Psi(\theta_{\eta, 1})>0$ and from \eqref{Eq: prob_conv_martingale_zero} $\Pbold (E_{\eta}) >0$ . Now let show by induction that on $E_{\eta}$, $\theta_{n,1}< \frac{1}{k}-\frac{\delta_{\eta}}{2}$ for $n \ge \eta$. The base step of the induction for $n=\eta$ is obvious. Let us assume that $\theta_{n-1,1}< \frac{1}{k} -\frac{\delta_{\eta}}{2}$, which implies that $R_{n,1}>0$ and $\theta_{j, 1} - \Psi(\theta_{j, 1}) >0$ for $\eta \le j \le n-1$. Therefore,   
\ben
\theta_{n,1} < \theta_{\eta,1}+ M_{n,1} \le \frac{1}{k} -\delta_{\eta} + \frac{\delta_{\eta}}{2} =\frac{1}{k} -\frac{\delta_{\eta}}{2}.
\een 
Therefore, on $E_{\eta}$, we have $\theta_{n, 1} - \Psi(\theta_{n, 1}) >0$ for all $n \ge \eta$ and $0<\lim_{n \to \infty}R_{n,1}<\infty$. But this is a contradiction, since on $E_{\eta}$ 
\be\label{Eq: R_n_negative}
\displaystyle \lim_{n \to \infty} R_{n,1}=\displaystyle \lim_{n \to \infty}(\theta_{\eta,1}+ M_{n,1}-\theta_{n,1}) \le -\frac{\delta_{\eta}}{2}.
\ee


\end{proof}
Our proof for Theorem \ref{Thm: fixation} is simpler than the proof of Proposition 7.1 from \cite{Si-2018}. This is due to the assumption $\sum_n \left(\frac{\sigma_n}{\tau_n}\right)^2 < \infty$.
\begin{proof}[Proof of Theorem \ref{Thm: fixation}]
From Theorem \ref{Thm: dominance}, we know that 
\begin{equation*}
\mathbb{P}(\mathcal{D}) =1.
\end{equation*}
By symmetry, W.l.o.g. we may assume $\theta_{n,1} \longrightarrow 0$ as $n \to \infty$. To prove \eqref{Eq: Fixation_happens}, it is enough to show that conditioned on the event $\theta_{n,1} \longrightarrow 0$ as $n \to \infty$, if 
$\sup_{n \ge 1} U_{n,1} = \infty$, then we get a contradiction.

Let us assume that $\sup_{n \ge 1} U_{n,1} = \infty$. Since $U_{n,1}$ is non-decreasing, it follows  
$ U_{n,1} \longrightarrow \infty,$ as $n \to \infty$.

W.l.o.g. we may assume that for some $m, n$(large enough), $U_{i+1,1}-U_{i,1} >0$ for $ n \le i \le m$. For some $C>0$ by assumption {\bf (C)},
\begin{equation}\label{Eq: Integrability_of_reinforcement_function}
\displaystyle \sum_{i=n}^{m} \frac{U_{i+1, 1}-U_{i,1}}{U_i^{\alpha}} \ge \int_{U_{n,1}}^{U_{m,1}} 
{\frac{1}{x^{\alpha}}\mathrm{d}x} \sim \frac{C}{U_{n,1}^{\alpha -1}}. 
\end{equation}

Recall that we know from the \eqref{Eq:Fundamental_equation}
\ben
U_{n+1,1} = U_{n,1} + B_{n+1,1},
\een
 where $B_{n+1,1}$ follows $Bin(\sigma_{n+1}, \Psi(\theta_{n,1}))$.

Let us re-write the above equation as 
\be\label{Eq: color_1_Normal_correction}
U_{n+1, 1} = U_{n,1} + \sigma_{n+1}\Psi(\theta_{n,1})+\eps_n \sqrt{\sigma_{n+1}\Psi(\theta_{n,1})(1-\Psi(\theta_{n,1}))},
\ee
where $\eps_n= \frac{B_{n+1,1}-\sigma_{n+1}\Psi(\theta_{n,1})}{\sqrt{\sigma_{n+1}\Psi(\theta_{n,1})(1-\Psi(\theta_{n,1}))}}.$

Since $x_1+x_2+\ldots+x_d=1$, and $f$ is non-decreasing, we have $\sum_{i=1}^d f(x_i) \ge f(\frac{1}{d}) =: C_d^{-1} $, which implies that 
\be \label{Eq: Bound_denominator_psi}
\Psi(x) \leq C_d f(x).
\ee

By the estimate from Lemma \ref{Lemma: properties_of_f} (iv)
applied to \eqref{Eq: color_1_Normal_correction}, we get 

\begin{eqnarray}\label{Eq: upper_bound_for_diff_U_n}
\nonumber U_{n+1, 1} & \le & U_{n,1} + C_d \sigma_{n+1}  \theta_{n,1}^{\alpha}
            + \eps_n \sqrt{\sigma_{n+1}\Psi(\theta_{n,1})(1-\Psi(\theta_{n,1}))} \\
\frac{U_{n+1, 1} - U_{n,1}}{U^{\alpha}_{n,1}} & \le & C_d \frac{\sigma_{n+1}}{\tau^{\alpha}_n}
             + \frac{1}{U^{\alpha}_{n,1}}\eps_n \sqrt{\sigma_{n+1}\Psi(\theta_{n,1})(1-\Psi(\theta_{n,1}))}.											
\end{eqnarray}
Thus,
\be\label{Eq: sum_over_upper_bound}
\displaystyle \sum_{i=n}^{\infty} \frac{U_{i+1, 1}-U_{i,1}}{U_{i,1}^{\alpha}} \le C_d \displaystyle
\sum_{i \ge n} \frac{\sigma_{i+1}}{\tau^{\alpha}_i} + \displaystyle \sum_{i \ge n} \frac{1}{U^{\alpha}_{i,1}}\eps_i \sqrt{\sigma_{i+1}\Psi(\theta_{i,1})(1-\Psi(\theta_{i,1}))}.
\ee

Combining this with \eqref{Eq: Integrability_of_reinforcement_function}, we get 

\be\label{Eq: lower_bound_on_color_1}
\frac{C}{U_{n,1}^{\alpha -1}} \le C_d \displaystyle
\sum_{i \ge n} \frac{\sigma_{i+1}}{\tau^{\alpha}_i} + \displaystyle \sum_{i \ge n} \frac{1}{U^{\alpha}_{i,1}}\eps_i \sqrt{\sigma_{i+1}\Psi(\theta_{i,1})(1-\Psi(\theta_{i,1}))}. 
\ee

We obtain using arguments similar to \eqref{Eq: sum_over_inverse_tau_to_integral}, for some suitable constant $C'>0$,

\ben
\sum_{i \ge n} \frac{\sigma_{i+1}}{\tau^{\alpha}_i}\le C' \int_{\tau_n}^{\infty} {\frac{1}{x^{\alpha}} \mathrm{d}x}= C' \frac{1}{\tau_n^{\alpha -1}}.
\een

Using the above bound in \eqref{Eq: lower_bound_on_color_1}, we obtain where $C, C'>0$ are suitable constants that may change accordingly  
\begin{eqnarray}\label{Eq: lower_bound_on_color_2}
\nonumber \frac{C}{U_{n,1}^{\alpha -1}}  & \le &  C' \frac{1}{\tau_n^{\alpha -1}} +  \displaystyle \sum_{i \ge n} \frac{1}{U^{\alpha}_{i,1}}\eps_i \sqrt{\sigma_{i+1}\Psi(\theta_{i,1})(1-\Psi(\theta_{i,1}))}\\
C  & \le & C' \theta_{n,1}^{\alpha -1}+ U_{n,1}^{\alpha -1} \displaystyle \sum_{i \ge n} \frac{1}{U^{\alpha}_{i,1}}\eps_i \sqrt{\sigma_{i+1}\Psi(\theta_{i,1})(1-\Psi(\theta_{i,1}))}.
\end{eqnarray}

Let us denote by 
\begin{equation}\label{Eq: T_n}
T_n := U_{n,1}^{\alpha -1} \displaystyle \sum_{i \ge n} \frac{1}{U^{\alpha}_{i,1}}\eps_i \sqrt{\sigma_{i+1}\Psi(\theta_{i,1})(1-\Psi(\theta_{i,1}))}.
\end{equation}

Using \eqref{Eq: Bound_denominator_psi} and $f(x) \le x^{\alpha}$, we get for some $C'>0$
\ben
T_n \le C' U_{n,1}^{\alpha -1} \displaystyle \sum_{i \ge n} \frac{1}{U^{\frac{\alpha}{2}}_{i,1}}\eps_i \sqrt{\sigma_{i+1} \frac{1}{\tau_i^{\alpha}}(1-\Psi(\theta_{i,1}))}.
\een
Since $U_{n,1}$ is non-decreasing in $n$, we have $\frac{U_{n,1}}{U_{i,1}} \le 1$. Therefore, from the above equation, we get  
\begin{eqnarray} \label{Eq: Upper_bound_1_on_T_n}
\nonumber T_n & \le & C' \frac{1}{U_{n,1}} \displaystyle \sum_{i \ge n} \eps_i  \sqrt{ \sigma_{i+1} \frac{U_{n,1}^{\alpha}}{\tau_i^{\alpha}}(1-\Psi(\theta_{i,1}))}\\
 \nonumber  & = & C' \frac{1}{\sqrt{U_{n,1}}}\displaystyle \sum_{i \ge n} \eps_i \sqrt{ 
		\frac{\sigma_{i+1}}{\tau_i} \left(\frac{U_{n,1}}{\tau_i}\right)^{\alpha-1}(1-\Psi(\theta_{i,1}))}\\
		& \le & C' \frac{1}{\sqrt{U_{n,1}}}\displaystyle \sum_{i \ge n} \eps_i \sqrt{ 
		\frac{\sigma_{i+1}}{\tau_i} \left(\frac{\tau_n}{\tau_i}\right)^{\alpha-1}},
\end{eqnarray}
where we have used $(1-\Psi(\theta_{i,1})) \le 1$, and $U_{n,1} \le \tau_n$.

It is easy to see that 
\begin{equation}\label{Eq: tau_n+1_by_sigma}
\frac{\sigma_{n+1}}{\tau_n} = \frac{\frac{\sigma_{n+1}}{\tau_{n+1}}}{1 - \frac{\sigma_{n+1}}{\tau_{n+1}}}.
\end{equation}

Observe that by our assumption $\sum_{n \ge 1}\frac{\sigma^2_n}{\tau^2_n} < \infty$, we have 
$\lim_{n \to \infty}\frac{\sigma_n}{\tau_n} =0$ and hence, from \eqref{Eq: tau_n+1_by_sigma} we have 
$\frac{\sigma_{n+1}}{\tau_n} \longrightarrow 0$, as $n \to \infty$.

Therefore, for all $n$(large enough), we obtain
\begin{eqnarray} \label{Eq: bound_on_ratio_tau_n_tau_i}
\nonumber \frac{\tau_i}{\tau_n} & = & \prod_{j=n}^{i-1}  \frac{\tau_{j+1}}{\tau_j}\\
 \nonumber                       & = & \prod_{j=n}^{i-1} \left(1+ \frac{\sigma_{j+1}}{\tau_j}\right)\\
                                 & = & \left(1+ o(1)\right)^{i-n}, \text{ for all } i \ge n.
\end{eqnarray}

Therefore, using the above inequality and $\alpha >1$, 
\be \label{Eq: summability_ratio_tau_n_tau_i}
\displaystyle \sum_{i \ge n}\left(\frac{\tau_n}{\tau_i}\right)^{\alpha-1} = \displaystyle \sum_{i \ge 1}
\left(\frac{1}{\left(1+ o(1)\right)^{i}} \right)^{\alpha -1} < \infty.
\ee

Similar to \eqref{Eq: L2_bounded_martingale}, it can be easily shown 
$\left(N_n= \sum_{i=1}^n \eps_i \sqrt{\frac{\sigma_{i+1}}{\tau_i} \left(\frac{\tau_n}{\tau_i}\right)^{\alpha-1}}\right)_{n \ge 1}$ is a $\L^2(\mathbb{R})$ bounded martingale, and we have for some $C>0$
\be
\sup_{n \ge 1} \Ebold \left[N_n^2\right] \le C \sum_{i=1}^{\infty} \left(\frac{\tau_n}{\tau_i}\right)^{\alpha-1}< \infty, 
\ee
where we have used \eqref{Eq: bound_on_ratio_tau_n_tau_i} and $\frac{\sigma_{n+1}}{\tau_n} \longrightarrow 0$, as $n \to \infty$.

Since $N_n$ is an $L^2(\mathbb{R})$ bounded, $N_n$ converges almost surely and in $L^2(\mathbb{R})$.
This implies that $\left(\sum_{i \ge n } \eps_i \sqrt{\frac{\sigma_{i+1}}{\tau_i} \left(\frac{\tau_n}{\tau_i}\right)^{\alpha-1}}\right)^2 < \infty,$ almost surely. 

Therefore, since we assumed that $U_{n,1} \longrightarrow \infty$ as $n \to \infty$, we get 
\be \label{Eq: Upper_bound_2_on_T_n}
T_n \le \frac{1}{\sqrt{U_{n,1}}}\displaystyle \sum_{i \ge n} \eps_i \sqrt{ 
		\frac{\sigma_{i+1}}{\tau_i} \left(\frac{\tau_n}{\tau_i}\right)^{\alpha-1}} \longrightarrow 0, \text{ as } 
		n \to \infty.
\ee

From \eqref{Eq: lower_bound_on_color_2}, we get on the event $\{\theta_{n,1} \longrightarrow 0\}$, 
\ben
C \le C' \theta_{n,1}^{\alpha -1}+ T_n \longrightarrow 0, \text{ as } n \to \infty,
\een
which is a contradiction since $C>0$.  
\end{proof}

\appendix
\section{Stochastic Approximation}\label{sec:app}
In this section we will introduce relevant quantities and results for the stochastic approximation techniques which are relevant for our model. We will use notation from \cite{Higham, Ben2}.
Define the following recursive scheme on a filtered probability space $(\Omega, \mathcal{F}, (\mathcal{F}_n)_{n\geq 0}, \mathbb{P})$ with values in a compact set $\Gamma \subset \mathbb{R}^d$:
\begin{equation}\label{def:theta}
\theta_{n+1} =  \theta_n+\frac{\sigma_{n+1}}{\tau_{n+1}}\left[ h(\theta_n)+ \frac{1}{\sigma_{n+1}}\Delta M_{n+1} \right],
\end{equation}
where $(\frac{\sigma_{n}}{\tau_{n}})_{n\geq 0}$ is a positive sequence of numbers, $h: \Gamma \rightarrow \mathbb{R}^d$ is a continuous function, $(\Delta M_n)_{n\geq 0}$ a $(\mathcal{F}_n)_{n\geq 0}$-martingale increment, defined in \eqref{def:Delta_Mn}.
The mean-field function $h$ was defined as
\begin{equation}\label{def:hh}
h(y)= \frac{f_d(y)}{\| f_d(y)\|_1}-y
\end{equation}
for $y\in \Gamma$.

The main idea of stochastic approximation techniques is to show that the recursive system defined in \eqref{def:theta} behaves like an ODE perturbed by a small noise term. Then the recursive system is shown to converge towards the equilibrium points of the deterministic flow induced by the driving term $h$.\\ 

Call $L(\{w_n\}_{n\geq 1})$ the \textit{limit set} of the sequence $\{w_n \}_{n\geq 1}$ is the set of points $x\in \mathbb{R}^d$ such that there exists s subsequence $(n_k)_{k\geq 1}$ such that $\lim_{k\rightarrow \infty} n_k=\infty$ for which $\lim_{k\rightarrow \infty} w_{n_k} =x$.

Consider furthermore the ODE 

\begin{equation}\label{def:ODE}
\dot y = h(y) 
\end{equation}
and associated to $h$ look at $(\Phi(t, x))_{t\geq 0, x \in \Gamma}$ the $\Gamma$-valued \textit{flow} of the system. The family $\{ \Phi_t\}_{t\in \mathbb{R}_+}$, where $\Phi_t(x) = \Phi(t,x)$, satisfies the group property. $\Phi_0=$Id and for all $(t,s) \in \mathbb{R}^2$
\[
\Phi_{t+s} = \Phi_t \circ \Phi_s.
\] 
For every $x\in \Gamma$, $(\Phi(t,x))_{t\geq 0}$ is the unique solution to the ODE \eqref{def:ODE}
\begin{equation}\label{def:flow}
\frac{\textrm{d}}{ \textrm{dt}} \Phi_t(x) = h(\Phi_t(x)).
\end{equation}
It exists since $h$ is locally Lipschitz which follows from continuity of the function $f$. Denote by $\mathcal{E}(\Phi) = \{ p\in \Gamma: \Phi_t(p)=p \text{ for all } t>0 \}$ the equilibrium set for the flow $\Phi$.
\begin{definition}
A compact subset $A \subset \Gamma$ is called internally chain recurrent if and only if for each $x \in A$ is chain recurrent for the flow $\Phi$ restricted to $A$. The point $x$ is called chain recurrent in $A$ if and only of for all $\delta >0,$ and $T>0,$ there exist $k\in \mathbb{N}$ and points $y_0,...,y_{k-1} \in A$ and $t_1,...,t_{k-1}$ such that for all $i=0,...,k-1$ and $y_k=x$
\[
t_i \geq T; \, \, \, \,  d(y_0,x) < \delta; \, \, \, \, d(\Phi_{t_i}(y_i),y_{i+1}) < \delta
\]
where $d(\cdot, \cdot)$ denotes the Euclidean distance in $\mathbb{R}^d$. The set of all chain recurrent points of the flow $\Phi$ will be denoted by CR$(\Phi)$.
\end{definition}
The omega limit set of $\Phi$ is defined by $\mathcal{L}(\Phi) = \bigcup_{x\in \Gamma} \omega(x)$, where 
\[
\omega(x) = \left\{ p\in \Gamma: \exists (t_k)_k \text{ such that } t_k\rightarrow \infty \, \text{ and } \, p=\lim_{k\rightarrow \infty}\Phi_{t_k}(x) \right \}.
\]
One has that 
\begin{equation}\label{sets}
\mathcal{E}(\Phi) \subset \mathcal{L}(\Phi) \subset CR(\Phi), 
\end{equation}
see page 21 of \cite{Ben2}.

We will need the notion of a strict Lyapunov function for the flow $\Phi$, see also Section 3 of \cite{Higham}.
\begin{definition}
A strict Lyapunov function for the flow $\Phi$ (or $h$) on $\Gamma$ is a continuous map $V:\mathbb{R}^d \rightarrow \mathbb{R}$ such that  $t\mapsto V(\Phi_t(x))$ is constant for $x\in \Gamma$ and strictly decreasing for all $x\in \mathbb{R}^d\setminus \Gamma$ for all $t>0$.
If such a $V$ exists we call $h$ a gradient-like vector field.
\end{definition}
\begin{lem}\label{lem:Lya}
Let the flow $\Phi$ be defined in \eqref{def:flow} and $h$ in \eqref{def:hh} and $F: [0,1]^d \rightarrow \mathbb{R}$ by
\[
F(y) = \sum_{i=1}^d \int_0^{y_i} \frac{f(z_i)}{z_i} dz_i.
\]
Then $V=-F$ is a strict Lyapunov function for $\Phi$ on $\mathcal{E}(\Phi)$.
\end{lem}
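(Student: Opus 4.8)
The plan is to verify the two defining properties of a strict Lyapunov function by differentiating $V=-F$ along the flow and showing the derivative is nonpositive, with equality holding exactly on the equilibrium set $\mathcal{E}(\Phi)$. First I would record that $F$ is well defined and $C^1$: by Lemma \ref{Lemma: properties_of_f}(ii) the integrand $z\mapsto f(z)/z$ extends continuously to $z=0$ with value $f'(0)$, so each integral converges and the fundamental theorem of calculus gives $\partial F/\partial y_i=f(y_i)/y_i$. I would also note that the flow leaves the simplex invariant, since $\sum_i h_i(y)=\|f_d(y)\|_1/\|f_d(y)\|_1-\sum_i y_i=0$, so $h$ is tangent to $\{y\ge 0:\sum_i y_i=1\}$; and each face $\{y_i=0\}$ is invariant because $h_i(y)=0$ whenever $y_i=0$. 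Consequently, along any orbit $y(t)=\Phi_t(x)$ the constraint $\sum_i y_i=1$ and the support of $y$ are preserved.

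The core computation is the Lyapunov derivative. Writing $S:=\|f_d(y)\|_1=\sum_k f(y_k)$ and using $\dot V=\langle\nabla V(y),h(y)\rangle=-\langle\nabla F(y),h(y)\rangle$, the identity $\big(f(y_i)/y_i\big)\,y_i=f(y_i)$ yields
\[
\langle\nabla F(y),h(y)\rangle=\sum_i\frac{f(y_i)}{y_i}\Big(\frac{f(y_i)}{S}-y_i\Big)=\frac{1}{S}\Big(\sum_i\frac{f(y_i)^2}{y_i}-S^2\Big),
\]
where the sum runs effectively over the support, the coordinates with $y_i=0$ contributing nothing because $f(0)=0$. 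Applying the Cauchy--Schwarz inequality in the form $\big(\sum_i f(y_i)\big)^2\le\big(\sum_i f(y_i)^2/y_i\big)\big(\sum_i y_i\big)$ together with $\sum_i y_i=1$ gives $S^2\le\sum_i f(y_i)^2/y_i$, hence $\langle\nabla F(y),h(y)\rangle\ge 0$ and therefore $\dot V\le 0$.

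For strictness I would analyse the equality case of Cauchy--Schwarz: equality forces the ratio $f(y_i)/y_i$ to be constant over the support of $y$. By Lemma \ref{Lemma: properties_of_f}(iii) the map $x\mapsto f(x)/x$ is strictly increasing, so a constant ratio over the support means all positive coordinates coincide, which is precisely the description of $\mathcal{E}(h)=\mathcal{E}(\Phi)$ in Lemma \ref{lem:eqpoints}. Thus $\dot V(y)=0$ if $y\in\mathcal{E}(\Phi)$ and $\dot V(y)<0$ otherwise. For $x\in\mathcal{E}(\Phi)$ the orbit is constant and so is $V(\Phi_t(x))$; for $x\notin\mathcal{E}(\Phi)$, uniqueness of solutions to \eqref{def:ODE} (valid since $h$ is locally Lipschitz) prevents the orbit from reaching a fixed point in finite time, so $\Phi_t(x)\notin\mathcal{E}(\Phi)$ for every $t$, giving $\dot V<0$ throughout and hence strict monotonicity of $t\mapsto V(\Phi_t(x))$.

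The main obstacle I anticipate is the equality analysis, where the monotonicity input from Lemma \ref{Lemma: properties_of_f}(iii) must be tied precisely to the equilibrium description, together with the careful bookkeeping of boundary coordinates $y_i=0$ (both in justifying differentiability of $F$ and in restricting the Cauchy--Schwarz step to the support). The invariance-of-faces observation is what makes the support-restricted argument legitimate for all $t$.
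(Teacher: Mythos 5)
Your proof is correct and follows essentially the same route as the paper: both differentiate $F$ along the flow to obtain $\frac{1}{\|f_d(y)\|_1}\bigl(\sum_i f(y_i)^2/y_i - \|f_d(y)\|_1^2\bigr)$ and control its sign by the variance-type inequality (the paper cites Jensen, you cite Cauchy--Schwarz; on the simplex these are the same step), identifying the equality set with $\mathcal{E}(h)$. If anything, your treatment is slightly more careful than the paper's, which asserts strict positivity and then separately checks vanishing on $\mathcal{E}(h)$ via \eqref{Eq: deducing_equilibrium_points}, whereas you pin down the equality case through the strict monotonicity of $x\mapsto f(x)/x$ and add the invariance and uniqueness-of-solutions observations needed for strict monotonicity along entire orbits.
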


\begin{proof}
For $i=1,...,d,$ write the ODE \eqref{def:ODE} as
\begin{equation}\label{eq:rep}
\begin{split}
\dot y_i(t) &=  \frac{y_i(t)}{\|f_d(y(t)) \|_1} \left (\frac{f(y_i(t))}{y_i(t)} -\sum_{j=1}^d y_j(t) \frac{f(y_j(t))}{y_j(t)} \right )\\
& = \frac{y_i(t)}{\|f_d(y(t)) \|_1}  \left (\frac{\partial }{\partial y_i} F (y(t))  -\sum_{j =1}^d y_j(t) \frac{\partial }{\partial y_j} F (y(t))  \right )
\end{split}
\end{equation}
where $F:\mathbb{R}^d \rightarrow \mathbb{R}$ and $F(y)= \sum_{i=1}^d F_i(y_i)$. We have that $F(0)=0$ and
\[
\frac{\partial }{\partial y_i} F (y) = \frac{f(y_i)}{y_i}. 
\]
For each coordinate $i=1,\cdots, d$ we can write,
\[
F_i(y_i) = \int_0^{y_i} \frac{\partial}{\partial z} F_i(z) dz = \int_0^{y_i} \frac{f(z)}{z} dz
\]
where $z\in \mathbb{R}$.  The function $-F$ is strictly Lyapunov for $\Phi$ since
\[
\begin{split}
\frac{\textrm{d}}{\textrm{dt}} F(y(t)) & = \sum_{i=1}^d \dot y_i(t) \frac{\partial}{\partial y_i} F(y(t)) \\
& = \frac{1}{\| f_d(y) \|_1} \left \{ \sum_{i=1}^d y_i(t) \left ( \frac{\partial}{\partial y_i} F(y(t)) \right )^2 - \left ( \sum_{i=1}^d y_i(t) \frac{\partial}{\partial y_i} F(y(t)) \right )^2\right \} >0
\end{split}
\] 
where the last step follows from Jensen's inequality. Finally note that it is constant on the equilibrium set $\mathcal{E}(h)=\mathcal{E}(\Phi)$:
\[
\begin{split}
& \sum_{i=1}^d y_i(t) \left ( \frac{\partial}{\partial y_i} F(y(t)) \right )^2 - \left ( \sum_{i=1}^d y_i(t) \frac{\partial}{\partial y_i} F(y(t)) \right )^2 \\
& = \sum_{i=1}^d \frac{f^2(y_i(t))}{y_i(t)} - \left ( \sum_{i=1}^d f(y_i(t))\right )^2 \\
& = \sum_{i=1}^d \|f_d(y(t)) \|_1 f(y_i(t)) -  \|f_d(y(t)) \|_1^2 =0,
\end{split}
\]
where in the before last equality we used \eqref{Eq: deducing_equilibrium_points}.
\end{proof}

\begin{prop}\label{prop:3.2}
Let $\Phi$ be the flow on $\Gamma$, where $\Gamma$ is compact. Furthermore, let $\Lambda \subset \Gamma$ be compact invariant set and  $V:\Gamma \rightarrow \mathbb{R}$ a  Lyapunov function for $\Phi$ on $\Lambda$ with finite $V(\Lambda)$, then 
\[
CR(\Phi) \subset \Lambda.
\]
\end{prop}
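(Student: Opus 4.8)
The plan is to prove $CR(\Phi)\subset\Lambda$ by a Conley-type argument, using crucially that the finiteness of $V(\Lambda)$ supplies ``regular'' levels $c\notin V(\Lambda)$ at which the sublevel sets of $V$ become trapping regions. First I would record the elementary facts about the Lyapunov function. Since $\Gamma$ is compact and $V$ is continuous, $t\mapsto V(\Phi_t(x))$ is bounded and non-increasing, hence converges; consequently, for every $x\in\Gamma$ the $\omega$-limit set $\omega(x)$ is nonempty, compact and invariant, and $V$ is constant on it. Combining invariance with the strict decrease of $V$ off $\Lambda$ forces $\omega(x)\subset\Lambda$, so the limiting value $V|_{\omega(x)}$ lies in $V(\Lambda)$.

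Next I would set up the attractor--repeller decomposition. Fix any $c\notin V(\Lambda)$. On the level set $\{V=c\}$ no point can lie in $\Lambda$ (otherwise $c\in V(\Lambda)$), so $V$ strictly decreases there; together with monotonicity along the flow this makes $N_c:=\{y\in\Gamma:V(y)\le c\}$ a forward-invariant trapping region. It therefore defines an attractor $A_c=\bigcap_{t\ge 0}\Phi_t(N_c)$ whose basin contains $\{V<c\}$, so that $\omega(y)\subset A_c$ whenever $V(y)<c$; let $A_c^{\ast}=\{y\in\Gamma:\omega(y)\cap A_c=\varnothing\}$ be the dual repeller. The standard Conley fact that no chain-recurrent point lies in the connecting region between a repeller and its attractor gives $CR(\Phi)\subset A_c\cup A_c^{\ast}$, with $A_c\cap A_c^{\ast}=\varnothing$ and both sets invariant. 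These are the classical statements from \cite{Ben2}.

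Finally I would argue by contradiction. Suppose $x\in CR(\Phi)$ but $x\notin\Lambda$. Then $V(\Phi_1(x))<V(x)$, and since $V(\Lambda)$ is finite I can choose a regular value $c$ with $V(\Phi_1(x))<c<V(x)$ and $c\notin V(\Lambda)$. Because $V(x)>c$ we have $x\notin A_c\subset N_c$, and therefore $x\in A_c^{\ast}$; invariance of the repeller gives $\omega(x)\subset A_c^{\ast}$. On the other hand $V(\Phi_1(x))<c$ places $\Phi_1(x)$ in the basin of $A_c$, whence $\omega(x)=\omega(\Phi_1(x))\subset A_c$. Thus $\omega(x)\subset A_c\cap A_c^{\ast}=\varnothing$, contradicting $\omega(x)\neq\varnothing$. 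Hence $x\in\Lambda$, and $CR(\Phi)\subset\Lambda$ as claimed.

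I would expect the main obstacle to be the temptation to estimate $V$ directly along $(\delta,T)$-pseudo-orbits: the small errors incurred at the jumps accumulate over arbitrarily long chains, and no naive telescoping of the inequality $V(y_{i+1})\le V(y_i)+\OO(\delta)$ yields a contradiction. The point of routing the proof through the attractor--repeller pair is precisely to sidestep this accumulation, since the inclusion $CR(\Phi)\subset A_c\cup A_c^{\ast}$ already packages the needed global control; and the hypothesis that $V(\Lambda)$ is finite (indeed merely of empty interior) is exactly what guarantees a separating regular level $c$ strictly between $V(\Phi_1(x))$ and $V(x)$.
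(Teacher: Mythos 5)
Your proof is correct, but note that the paper itself does not prove this proposition at all: it simply states that it ``is the same as Proposition 1.2 from \cite{Higham}'' (Bena\"{i}m 1996) and imports it as a black box. What you have written is a faithful, self-contained reconstruction of the standard Conley-theoretic argument behind that citation. Your key steps all check out: $N_c=\{V\le c\}$ is forward-invariant for any $c$, and the choice $c\notin V(\Lambda)$ is exactly what upgrades it to an attractor block, since every point of the level set $\{V=c\}$ lies off $\Lambda$ and hence $\Phi_t(N_c)\subset\{V<c\}\subset\mathrm{int}_\Gamma(N_c)$ for all $t>0$; compactness of $\Gamma$ makes $\Phi_t(N_c)$ closed, so $\omega(y)\subset A_c$ for $y$ in the basin; the dual repeller $A_c^*$ is closed and invariant and $CR(\Phi)\subset A_c\cup A_c^*$ is the classical attractor--repeller fact from \cite{Ben2}; and the final contradiction via $\omega(x)\subset A_c\cap A_c^*=\varnothing$ against $\omega(x)\neq\varnothing$ is clean. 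Two remarks for comparison with the literature the paper leans on: Bena\"{i}m's own proof is usually packaged slightly differently, through internally chain transitive sets (a chain transitive set meeting the basin of an attractor must lie in the attractor, so $V$ is forced to be constant on it, and emptiness of the interior of $V(\Lambda)$ then pins that constant and the set inside $\Lambda$); your attractor--repeller route is equivalent and arguably more elementary, and as you observe it needs only that $V(\Lambda)$ has empty interior, which is strictly weaker than the finiteness hypothesis stated in the proposition. Your closing observation about why naive telescoping of $V$ along $(\delta,T)$-pseudo-orbits fails is also accurate and identifies precisely the reason the attractor--repeller machinery is invoked.
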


The last proposition is same as Proposition 1.2 from \cite{Higham}. Choosing $V=-F$ and $\Lambda= \mathcal{E}(\Phi)$ in the last proposition together with \eqref{sets}  implies that
\[
\mathcal{E}(\Phi) = \mathcal{L}(\Phi) = CR(\Phi).
\]

The following theorem is equivalent to Theorem 6 from \cite{Kaur}, based on Theorem 5.7 from \cite{Ben2}. 
\begin{theorem}\label{th:1.2}
Let $\{ \theta_n \}_{n\geq 1}$ be a solution to \eqref{def:theta} and assume that
\begin{itemize}
\item[(i)] $\sum_{n=1}^{\infty} \left ( \frac{\sigma_n}{\tau_n}\right )^2 <\infty$.
\item[(ii)] $h$ is a local Lipschitz function.
\item[(iii)] $\sup_{n\geq 1} \mathbb{E} (\| \Delta M_{n} \|^2_2|\mathcal{F}_n) <\infty$ a.s.  
\end{itemize}
Then almost surely $L(\{ \theta_n \}_{n\geq 1})$ is a connected set, internally chain-recurrent for the flow $\Phi$ induced
by $h$, $L(\{ \theta_n \}_{n\geq 1}) \subset CR(\Phi)$.
\end{theorem}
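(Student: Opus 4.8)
The plan is to verify that the recursion \eqref{def:theta} satisfies the hypotheses of Benaïm's convergence theorem (Theorem~5.7 of \cite{Ben2}, as packaged in \cite{Kaur}) and then invoke it. First I would put \eqref{def:theta} into canonical Robbins--Monro form
\[
\theta_{n+1} = \theta_n + \gamma_{n+1}\bigl(h(\theta_n) + U_{n+1}\bigr), \qquad \gamma_{n+1} := \frac{\sigma_{n+1}}{\tau_{n+1}}, \quad U_{n+1} := \frac{1}{\sigma_{n+1}}\Delta M_{n+1},
\]
so that the effective noise injected into the iterate is $\gamma_{n+1}U_{n+1} = \tau_{n+1}^{-1}\Delta M_{n+1}$. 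This bookkeeping addresses exactly the issue flagged after \eqref{Eq:fundamental_recursion_mean_field}: the martingale increment carries the extra factor $\sigma_{n+1}^{-1}$, so one must keep careful track of which quantity plays the role of the gain and which the role of the noise.

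Next I would check the structural requirements of the asymptotic-pseudotrajectory (APT) machinery. For the \emph{drift}, $h$ is locally Lipschitz by hypothesis~(ii); since the iterates live in the compact simplex $\Gamma$, we automatically have $\sup_n\|\theta_n\|<\infty$, so $h$ is bounded and Lipschitz on the relevant set and the flow $\Phi$ exists and is complete. For the \emph{gain sequence}, condition~(i) gives $\sum_n \gamma_n^2 < \infty$, whence $\gamma_n \to 0$; moreover $1-\gamma_n = \tau_{n-1}/\tau_n$, so $\prod_{k\le n}(1-\gamma_k) = \tau_0/\tau_n \to 0$ as $\tau_n \uparrow \infty$, and the standard product criterion yields $\sum_n \gamma_n = \infty$. \emph{Precompactness} of $\{\theta_n\}$ is immediate from compactness of $\Gamma$.

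The crux is the noise/robustness condition, namely that for every $T>0$
\[
\lim_{n\to\infty}\ \sup\Bigl\{\,\Bigl\|\sum_{i=n}^{k-1}\gamma_{i+1}U_{i+1}\Bigr\|_2 : k \text{ with } \sum_{i=n}^{k-1}\gamma_{i+1}\le T\Bigr\} = 0 \quad \text{a.s.}
\]
I would establish this by showing that the martingale $M_n := \sum_{k=1}^{n}\gamma_k U_k = \sum_{k=1}^{n}\tau_k^{-1}\Delta M_k$ converges almost surely. Its predictable quadratic variation is $\sum_k \tau_k^{-2}\,\mathbb{E}(\|\Delta M_k\|_2^2\mid\mathcal{F}_{k-1})$, which is summable: hypothesis~(iii) bounds the conditional second moments, and the telescoping estimate behind \eqref{Eq: sum_over_inverse_tau_to_integral} gives $\sum_k \tau_k^{-2} \le \sum_k \sigma_k/\tau_k^2 \le \tau_0^{-1} < \infty$ (one may alternatively use the multinomial bound $\mathbb{E}(\|\Delta M_k\|_2^2\mid\mathcal{F}_{k-1}) \le C\sigma_k$ together with $\sum_k \sigma_k/\tau_k^2<\infty$). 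Hence $M_n$ is $L^2$-bounded, converges a.s., and its Cauchy tails force the displayed supremum to vanish; equivalently, this is precisely the setting of Proposition~4.2 of \cite{Ben2} with $q=2$, since $\sup_n \mathbb{E}(\|U_{n+1}\|_2^2\mid\mathcal{F}_n)<\infty$ and $\sum_n\gamma_n^2<\infty$.

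With the drift, gain, precompactness, and robustness conditions in hand, the interpolated process associated to $(\theta_n)$ is almost surely an APT of $\Phi$, and Theorem~5.7 of \cite{Ben2} yields that $L(\{\theta_n\}_{n\ge1})$ is almost surely a nonempty, compact, connected, internally chain transitive (hence internally chain recurrent) set for $\Phi$, so in particular $L(\{\theta_n\}_{n\ge1}) \subset CR(\Phi)$, which is the assertion. I expect the only genuinely delicate point to be the noise step: one must confirm that the anomalous $\sigma_{n+1}^{-1}$ factor, together with (i) and (iii), still produces a summable quadratic variation, after which the remainder is a routine matching of hypotheses to the cited theorem.
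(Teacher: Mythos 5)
Your proposal is correct and follows essentially the same route as the paper: both reduce the statement to Bena\"{i}m's limit-set theorem (Theorem 5.7 of \cite{Ben2}, via Theorem 6 of \cite{Kaur}) by verifying its hypotheses for the scheme \eqref{def:theta}. The only difference is one of detail: where the paper dismisses the noise condition as following ``trivially'' from \eqref{def:Delta_Mn}, you correctly unpack it, observing that the effective noise is $\tau_{n+1}^{-1}\Delta M_{n+1}$, that the multinomial structure gives $\mathbb{E}(\|\sigma_{n+1}^{-1}\Delta M_{n+1}\|_2^2\mid\mathcal{F}_n)\le \sigma_{n+1}^{-1}\le 1$, and that $\sum_k \sigma_k/\tau_k^2<\infty$ makes the quadratic variation summable --- a welcome sharpening of the paper's terse argument rather than a departure from it.
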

\begin{proof}
The first assumption is trivially satisfied, the second follows from the fact that $f$ is locally Lipschitz since it is $C^1((0,1))$ and the third follows trivially from writing the martingale difference as in \eqref{def:Delta_Mn}.
\end{proof}
The next corollary is analogous to Corollary 3.3. from \cite{Higham}, the proof is adapted.
\begin{cor}\label{cor:conveqpoints}
Let $\{ \theta_n \}_{n\geq 1}$ be defined in \eqref{def:theta}. Then $\theta_n \rightarrow \theta^*$ a.s. as $n\rightarrow \infty$ where $\theta^* \in \mathcal{E}(h)$.
\end{cor}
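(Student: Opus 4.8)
The plan is to assemble the stochastic approximation results of the Appendix with the explicit description of the equilibrium set, since the corollary is essentially a packaging of already-established facts. First I would apply Theorem \ref{th:1.2} to the recursion \eqref{def:theta}; its three hypotheses are precisely those checked in the proof of that theorem (summability of $(\sigma_n/\tau_n)^2$, local Lipschitz continuity of $h$ inherited from $f\in C^1((0,1))$, and the bounded conditional second moment of the centred multinomial increment $\Delta M_{n+1}$ of \eqref{def:Delta_Mn}). Consequently, almost surely the limit set $L(\{\theta_n\}_{n\geq 1})$ is connected and satisfies $L(\{\theta_n\}_{n\geq 1}) \subset CR(\Phi)$.

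Next I would pin down $CR(\Phi)$ exactly. By Lemma \ref{lem:Lya}, $V=-F$ is a strict Lyapunov function for $\Phi$ on $\mathcal{E}(\Phi)$, and the computation there also records $\mathcal{E}(\Phi)=\mathcal{E}(h)$. Applying Proposition \ref{prop:3.2} with $\Lambda=\mathcal{E}(\Phi)$, which is compact, invariant, and has finite image under $V$, yields $CR(\Phi)\subset \mathcal{E}(\Phi)$, while the chain of inclusions \eqref{sets} supplies the reverse containment. Hence
\[
CR(\Phi)=\mathcal{E}(\Phi)=\mathcal{E}(h).
\]

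It then remains to combine connectedness with the structure of $\mathcal{E}(h)$. By Lemma \ref{lem:eqpoints}, the equilibrium set $\mathcal{E}(h)$ is a finite collection of isolated points. A connected subset of a set of isolated points can only be a singleton, so almost surely $L(\{\theta_n\}_{n\geq 1})=\{\theta^*\}$ for some $\theta^*\in \mathcal{E}(h)$. Since $(\theta_n)_n$ takes values in the compact simplex $\Gamma$, it is a bounded sequence with a unique accumulation point, and therefore converges to that point; that is, $\theta_n\to \theta^*$ almost surely with $\theta^*\in\mathcal{E}(h)$.

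The genuinely substantive inputs are all imported rather than produced here: the analytic heart lies in Theorem \ref{th:1.2} (SA convergence of the limit set into the chain-recurrent set) and in the construction of the strict Lyapunov function $F$ of Lemma \ref{lem:Lya}. Given these, the only point in the corollary itself requiring care is the passage from a connected limit set to a single limit point, which hinges entirely on $\mathcal{E}(h)$ being discrete; this is guaranteed by the explicit description in Lemma \ref{lem:eqpoints}. I therefore do not anticipate a serious obstacle beyond confirming that the cited ingredients line up, because a single equilibrium point is forced the moment connectedness meets discreteness.
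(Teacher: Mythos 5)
Your proof is correct and takes essentially the same route as the paper's: Theorem \ref{th:1.2} to place $L(\{\theta_n\}_{n\geq 1})$ inside $CR(\Phi)$, Lemma \ref{lem:Lya} together with Proposition \ref{prop:3.2} and the inclusions \eqref{sets} to identify $CR(\Phi)=\mathcal{E}(\Phi)=\mathcal{E}(h)$, and the isolatedness of equilibria from Lemma \ref{lem:eqpoints} to collapse the connected limit set to a single point. If anything, you make explicit two steps the paper leaves terse, namely that a connected subset of a discrete set is a singleton and that a bounded sequence with a unique accumulation point converges.
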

\begin{proof}
Let $\Phi$ be the flow induced by $h$ defined in \eqref{def:hh} and let $\Gamma=L(\{\theta_n \}_{n\geq 1})$ the set of all almost sure limiting points of 
$\{\theta_n \}_{n\geq 1}$. By Theorem \ref{th:1.2} is a connected set, internally chain-recurrent for the flow $\Phi$ induced
by $h$. By Lemma \ref{lem:Lya} we know that there exists a Lyapunov function for the flow on $\Gamma$ and by Proposition \ref{prop:3.2} the set  $L(\{\theta_n \}_{n\geq 1})$ consists of equilibria, $\mathcal{E}(\Phi) =\mathcal{E}(h)$. Since they are isolated (Lemma \ref{lem:eqpoints}), $L(\{\theta_n \}_{n\geq 1})$ is an equilibrium.
\end{proof}

\section*{Acknowledgement}
The authors are grateful to V.S. Borkar for his help with stochastic approximations. The authors are grateful to Andrew Wade for his valuable insight with the class of reinforcement functions.

\begin {thebibliography}{99}

\bibitem{trade}
 R. Armenter and   M. Koren, \textit{A Balls-and-Bins Model of Trade},   American Economic Review,  Vol. 104, No. 7,  2014.

\bibitem{bai1}
Z.-D. Bai and F. Hu, \textit{Asymptotic theorems for urn models with nonhomogeneous generating
matrices}, Stochastic Process. Appl., 80(1):87--101, 1999.

\bibitem{bai2}
 Z.-D. Bai and F. Hu, \textit{Asymptotics in randomized urn models}, Ann. Appl. Probab., 15(1B):914--
940, 2005. 

\bibitem{bai}
Z.-D. Bai, F. Hu, and L. Shen, \textit{An adaptive design for multi-arm clinical trials}, J. Multivariate Anal.,
81(1):1--18, 2002.

\bibitem{Deb2}
A. Bandyopadhyay, S. Janson, D. Thacker,  \textit{Strong convergence of infinite color balanced urns under uniform ergodicity}, Jour. of Appl. Probab., 57(3), 853--865, 2020.

\bibitem{Deb1}
 A. Bandyopadhyay, D. Thacker, \textit{P\'olya urn schemes with infinitely many colors}, Bernoulli 23 (4B) 3243 -- 3267,  2017.

\bibitem{Higham} M. Bena\"{i}m, \textit{A dynamical system approach to stochastic approximations}, Siam J. Control
Optim. 34, 437--472, 1996.

\bibitem{BenVertex} M. Bena\"{i}m, \textit{Vertex-reinforces random walks and a conjecture of Pemantle}, Ann. of Probab. 25, no. 1, 361--392, 1997.

\bibitem{Ben1} M. Bena\"{i}m, \textit{Recursive algorithms, urn processes and chaining number of chain recurrent
sets}, Ergodic Theory Dynam. Systems, 18, no 1, 53--87, 1998.

\bibitem{Ben2} M. Bena\"{i}m, \textit{Dynamics of stochastic approximation algorithms},  In S\'eminaire de Probabilit\'es,
XXXIII, volume 1709 of Lecture Notes in Math., pages 1--68. Springer, Berlin, 1999.

\bibitem{Bor}
V.S. Borkar, \textit{Stochastic approximation - a dynamical systems viewpoint}, Springer 2008.

\bibitem{bio}
P. Donnelly, \textit{Partition structures, Polya urns, the Ewens sampling formula, and the ages of alleles},
Theo. Pop. Biology, Vol. 30, Issue 2, p. 271--288, 1986.

\bibitem{trade2}
E. Drinea, A. Frieze and M. Mitzenmacher, \textit{Balls in bins processes with feedback},  Proceedings of the 11th An-
nual ACM-SIAM Symposium on Discrete Algorithms, 308--315. Society for Industrial and Applied Mathematics,
Philadelphia, PA, USA, 2002

\bibitem{Polya}
F. Eggenberger and G. P\'olya, \textit{\"Uber die Statistik verketter Vorg\"ange}, Z. Angew. Math. Mech. 3, 279--289, 1923.

\bibitem{hirsch1}
C. Hirsch, M. Holmes, M., V. Kleptsyn, \textit{Absence of WARM percolation in the very strong reinforcement regime}, Ann. Appl. Probab. 31 (1), 199--217, 2021.

\bibitem{mark}
R. v.d. Hofstad, M. Holmes, A. Kuznetsov, W.M. Ruszel, \textit{Strongly reinforced P\'olya urns with graph-based competition}, The Annals of Applied Probability 26, no. 4 , 2494--2539, 2016.

\bibitem{hirsch}
M. Holmes, V. Kleptsyn, \textit{Proof of the WARM whisker conjecture for neuronal connections}, Chaos. Apr;27(4):043104, 2017.

\bibitem{janson}
 S. Janson, \textit{Random replacements in P\'olya urns with infinitely many colours},  Electron. Commun. Probab. 24 1 -- 11, 2019.

\bibitem{Kaur}
G. Kaur, \textit{Negatively reinforced balanced urn schemes}, Adv. Applied Math. 105, 48--82, 2019.

\bibitem{mon}
K. Khanin, R. Khanin, \textit{A probabilistic model for the establishment of neuron polarity}, J. Math. Biol.,
42(1):26-40, 2001.

\bibitem{LauPa1}
S. Laruelle and G. Pag\`es, \textit{Randomized urn models revisited using stochastic approximation},
Ann. Appl. Probab., 23(4):1409–1436, 2013.

\bibitem{LauPa}
S. Laruelle and G. Pag\`es, \textit{Nonlinear randomized urn models: a stochastic
approximation viewpoint}, Electron. J. Probab. 24, np. 98, p.1--47, 2019.

\bibitem{mai}
C. Mailler and J.-F. Marckert, \textit{Measure-valued P\'olya processes}, Electron. J. Probab. 22, no. 26, 1--33, 2017.

\bibitem{Oliviera_2009}
R.I. Oliviera, \textit{The onset of dominance in balls-in-bins processes with feedback}, Random Struct. Algorithms, 34, Issue 4, 454--477, 2009.

\bibitem{Pem-90}
R. Pemantle, \textit{A time-dependent version of P\'olya's urn}, J. Theoret. Probab., 3:627--637, 1990.

\bibitem{Peman}
R. Pemantle, \textit{A survey of random processes with reinforcement}, Probab. Surveys 4, 1 -- 79, 2007.

\bibitem{CS}
M. Raab and A. Steger, \textit{Balls into Bins - A Simple and Tight Analysis}, Luby M., Rolim J.D.P., Serna M. (eds) Randomization and Approximation Techniques in Computer Science. RANDOM 1998. Lecture Notes in Computer Science, vol 1518. Springer, Berlin, Heidelberg, 1998.

\bibitem{Rob}
H. Robbins, S. Monro,  \textit{A stochastic approximation method}, Annals
of Mathematical Statistics 22, 400--407, 1951.

\bibitem{trade1}
H. Shi and Z. Jiang, \textit{The missing trade of China: balls-and-bins model}, Empir Econ 50, 1511--1526, 2016.

\bibitem{Si-2018}
N. Sidorova, \textit{Time-dependent balls and bin model with positive feedback}, arXiv: 1809.02221, 2018.

\end {thebibliography}

\end{document}